\documentclass[12pt]{amsart}

\usepackage{amsmath, amssymb}

\newcommand{\tr}[2]{\textrm{tr}_{#1}{#2}}
\newcommand{\ti}[1]{\tilde{#1}}

\renewcommand{\leq}{\leqslant}
\renewcommand{\geq}{\geqslant}

\newcommand{\be}{\begin{equation}}
\newcommand{\ee}{\end{equation}}

\newcommand{\ol}{\overline}
\newcommand{\ul}{\underline}
\newcommand{\bM}{\overline{M}}

\begin{document}
\newtheorem{claim}{Claim}
\newtheorem{theorem}{Theorem}[section]
\newtheorem{lemma}[theorem]{Lemma}
\newtheorem{corollary}[theorem]{Corollary}
\newtheorem{proposition}[theorem]{Proposition}
\newtheorem{question}{question}[section]
\newtheorem{definition}[theorem]{Definition}
\newtheorem{remark}[theorem]{Remark}

\numberwithin{equation}{section}

\title[Hessian equations]
{On a class of Hessian type equations on Riemannian manifolds}

\author{Heming Jiao}
\address{School of Mathematics and Institute for Advanced Study in Mathematics, Harbin Institute of Technology,
         Harbin, Heilongjiang, 150001, China}
\email{jiao@hit.edu.cn}

\author{Jinxuan Liu}
\address{School of Mathematics, Harbin Institute of Technology,
         Harbin, Heilongjiang, 150001, China}
\email{16b912021@stu.hit.edu.cn}

\begin{abstract}

In this paper, we consider a class of Hessian type equations which include the $(n-1)$ Monge-Amp\`{e}re equation
on Riemannian manifolds.
The \emph{a priori} $C^2$ estimates and the existence of solutions are established.

{\em Keywords:} Hessian type equations; $(n-1)$ Monge-Amp\`{e}re equation; \emph{a priori} $C^2$ estimate.

\end{abstract}

\maketitle

\section{Introduction}

In this paper, we consider the $C^2$ estimates for the Hessian equations of the form
\begin{equation}
\label{cj-16}
\sigma_k (\lambda (\Delta u g - \nabla^2 u + \chi)) = f (x, u, \nabla u)
\end{equation}
on a compact Riemannian manifold $(M, g)$ of dimension $n \geq 2$ with smooth boundary $\partial M \neq \emptyset$,
where $\chi$ is a $(0, 2)$-tensor field, ${\nabla ^2}u$ is the Hessian of $u$, $\lambda (h)$ denotes the eigenvalues of a $(0, 2)$-tensor
field $h$ with respect to the metric $g$, $f$ is a smooth positive function which may depend on $(x, \nabla u) \in T^* M$ and $u$, and
\[
\sigma_{k} (\lambda) = \sum_ {i_{1} < \ldots < i_{k}}
\lambda_{i_{1}} \ldots \lambda_{i_{k}},\ \ k = 1, \ldots, n,
\]
are the elementary symmetric functions.

Equation \eqref{cj-16} falls in the frame of equations considered in \cite{Guan99} or \cite{GJ15} (see \eqref{cj-8})
where $f$ is assumed to be convex with respect to $\nabla u$. Using the idea of \cite{CJ20} where the star-shaped hypersurface
with a class of prescribed curvatures were considered, we are able to
establish the $C^2$ estimates without this condition.

When $k = n$, the equation \eqref{cj-16} becomes
\begin{equation}
\label{cj-15}
\det (\Delta u g - \nabla^2 u + \chi) = f (x, u, \nabla u)
\end{equation}
which is called the $(n-1)$ Monge-Amp\`{e}re equation. Our interest to study equations \eqref{cj-16} and \eqref{cj-15}
is partly from their complex analogues. The complex $(n-1)$ Monge-Amp\`{e}re equation is closely related to the Gauduchon
conjecture (\cite[\S IV.5]{Gauduchon84}) which was solved by Sz\'ekelyhidi-Tosatti-Weinkove \cite{STW17}.
For more results about the complex $(n-1)$ Monge-Amp\`{e}re equation, the reader is referred to
\cite{FWW10,FWW15,Popovici15,Szekelyhidi18,TW17,TW19} and references therein.

When $k=n=2$, equation \eqref{cj-16} is the classic Monge-Amp\`{e}re equation
\[
\det (\nabla^2 u + \chi) = f (x, u, \nabla u)
\]
which plays a crucial role in many geometric problems such as the Minkowski problem (\cite{CY76}, \cite{Nirenberg53},  \cite{Pogorelov52}, \cite{Pogorelov78}) and the Weyl problem (\cite{GL94}, \cite{HZ95}, \cite{Nirenberg53}).

The G{\aa}rding cone $\Gamma_k$ is defined by
\[
{\Gamma _k} = \{ \lambda  \in {\mathbb{R}^n}\left| {{\sigma _j}\left( \lambda  \right) > 0
{\text{  for  }}j = 1, \ldots ,k} \right.\}, k = 1, \ldots, n.
\]
We state some well known results about $\sigma_k$ in $\Gamma_k$:
\begin{equation}
\label{cj-14}
\frac{\partial \sigma_k}{\partial \lambda_i} = \sigma_{k-1; i} (\lambda) > 0, \mbox{ for all } \lambda \in \Gamma_k \mbox{ and }
  1 \leq i \leq n,
\end{equation}
where $\sigma_{k-1; i} (\lambda)$ is the symmetric function with $\lambda_i = 0$,
\begin{equation}
\label{cj-13}
\sigma_k^{1/k} \mbox{ is concave in } \Gamma_k.
\end{equation}
\begin{definition}
A function $u \in C^2 (M)$ is called admissible if $\lambda (\Delta u g - \nabla^2 u + \chi) (x) \in \Gamma_k$
for all $x \in M$.
\end{definition}
We note that \eqref{cj-16} is elliptic with respect to admissible solutions. In this paper,
we suppose there exists an admissible subsolution $\underline{u} \in C^2 (\ol M)$ satisfying
\begin{equation}
\label{cj-12}
\sigma_k (\Delta \underline{u} g - \nabla^2 \underline{u} + \chi) \geq f (x, \underline{u}, \nabla \underline{u})
   \mbox{ in } M.
\end{equation}

Our main results are the following second order estimates. %We first consider the case that $M$ is
%a closed manifold (i.e., $\partial M = \emptyset$).
%\begin{theorem}
%\label{cj-thm-3}
%Suppose $M$ is a closed manifold and there exists an admissible function $\ul u \in C^2 (M)$.
%Assume that we have a bound
%\[
%|u|_{C^1 (M)} \leq C_0.
%\]
%Let $u \in C^4 (M)$ be an admissible solution to \eqref{cj-16}. Then there exists a positive constant $C$
%depending on $C_0$,
%$|\underline{u}|_{C^2 (M)}$, $|f|_{C^2}$
%and $\inf f$ such that
%\begin{equation}
%\label{cj-17}
%\sup_M |\nabla^2 u| \leq C.
%\end{equation}
%\end{theorem}
%When $\partial M \neq \emptyset$, we can prove:
\begin{theorem}
\label{cj-thm-3}
Suppose there exists an admissible function $\ul u \in C^2 (\ol M)$.
Let $u \in C^4 (M) \cap C^2 (\ol M)$ be an admissible solution to \eqref{cj-16}
on $\overline{M}$. Then there exists a positive constant $C$ depending on $|u|_{C^1 (\overline{M})}$,
$|\underline{u}|_{C^2 (\overline{M})}$, $|f|_{C^2}$
and $\inf f$ such that
\begin{equation}
\label{cj-18}
\sup_{\overline{M}} |\nabla^2 u| \leq C (1 + \sup_{\partial M} |\nabla^2 u|).
\end{equation}
\end{theorem}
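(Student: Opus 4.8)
The plan is to prove \eqref{cj-18} by the maximum principle, applied to the largest eigenvalue of the modified Hessian
\[
\mathfrak{h} := \Delta u\, g - \nabla^2 u + \chi .
\]
The starting observation is a reduction: it suffices to bound $\lambda_1 := \lambda_{\max}(\mathfrak{h})$ from above on $\overline M$. Since $u$ is admissible, $\lambda(\mathfrak{h}) \in \Gamma_k \subset \Gamma_1$, so $\mathrm{tr}_g\,\mathfrak{h} = \sigma_1(\lambda(\mathfrak{h})) > 0$; hence if $\lambda_1 \le C$ then every eigenvalue $\lambda_i$ of $\mathfrak{h}$ satisfies $\lambda_i = \mathrm{tr}_g\,\mathfrak{h} - \sum_{j\neq i}\lambda_j \ge -(n-1)C$, so $\mathfrak{h}$ is bounded, $|\Delta u| = \frac{1}{n-1}\,|\sigma_1(\lambda(\mathfrak{h})) - \mathrm{tr}_g\,\chi|$ is bounded, and thus $\nabla^2 u = \Delta u\, g + \chi - \mathfrak{h}$ is bounded. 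From now on assume $\lambda_1 \ge 1$, so that $|\nabla^2 u| \le C\lambda_1$ at any point under consideration.

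Next I would apply the maximum principle to
\[
W = \log\lambda_1 + \varphi(|\nabla u|^2) + \psi(u),
\]
with $\varphi$ a bounded increasing convex function (e.g. $\varphi(s) = -\tfrac12\log(1 + 2K - s)$, $K = 1 + \sup_{\overline M}|\nabla u|^2$, so $\varphi', \varphi'' > 0$) and $\psi$ linear (or exponential) in $u$ with a large coefficient to be chosen later; the data attached to $\varphi, \psi$ are controlled by $|u|_{C^1(\overline M)}$. Let $x_0$ be a point where $W$ attains its maximum over $\overline M$. If $x_0 \in \partial M$, then $\log\lambda_1(x_0) \le C + C\sup_{\partial M}|\nabla^2 u|$, and $W(x) \le W(x_0)$ for all $x$ gives \eqref{cj-18} at once; so the essential case is $x_0 \in M$. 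Because $\lambda_1$ is not smooth where the top eigenvalue of $\mathfrak{h}$ is degenerate, I would near $x_0$ replace $\lambda_1$ by the smooth quantity $\mathfrak{h}(e_1, e_1)$, $e_1$ a suitable local extension of a unit top eigenvector at $x_0$ (the standard perturbation), and then exploit $\nabla W(x_0) = 0$ and $\nabla^2 W(x_0) \le 0$ through the linearized operator $L := \sum_{i,j} F^{ij}\nabla_i\nabla_j$ of \eqref{cj-16}, where $F^{ij} = \partial\sigma_k(\lambda(\mathfrak{h}))/\partial\mathfrak{h}_{ij}$. By \eqref{cj-14} one has $F^{ij} > 0$; moreover $\inf f > 0$ (with Maclaurin's inequality) gives $\sum_i F^{ii} \ge \nu > 0$, the Euler identity gives $\sum_i F^{ii}\lambda_i = k f$, and concavity \eqref{cj-13} applied to the admissible function $\underline u$ gives $\sum_i \tilde F^{ii}(\mathfrak{h}_{ii} - \underline{\mathfrak{h}}_{ii}) \le \tilde f - \tilde F(\underline{\mathfrak{h}}) \le C$ (here $\tilde F = \sigma_k^{1/k}$).

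At $x_0$, in normal coordinates diagonalizing $\mathfrak{h}$, I would expand $0 \ge L W$. The term $L(\log\lambda_1)$ produces, by the usual eigenvalue-perturbation formulas, $\frac{1}{\lambda_1} F^{ij}\nabla_i\nabla_j\mathfrak{h}_{11} - \frac{1}{\lambda_1^2}\sum_i F^{ii}(\nabla_i\mathfrak{h}_{11})^2$ together with nonnegative eigenvalue-gap terms $\sum_i\sum_{p>1}\frac{F^{ii}(\nabla_i\mathfrak{h}_{1p})^2}{\lambda_1(\lambda_1 - \lambda_p)}$; commuting covariant derivatives rewrites $F^{ij}\nabla_i\nabla_j\mathfrak{h}_{11}$ as $F^{ij}\nabla_1\nabla_1\mathfrak{h}_{ij}$ modulo curvature terms of size $\le C\lambda_1\sum_i F^{ii}$, and differentiating the equation in the form $\sigma_k^{1/k}(\lambda(\mathfrak{h})) = f^{1/k}$ twice along $e_1$, discarding the nonpositive second-derivative term by concavity \eqref{cj-13}, gives $F^{ij}\nabla_1\nabla_1\mathfrak{h}_{ij} \ge c\,\nabla_1\nabla_1(f^{1/k}) - C$. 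On the other hand $\varphi' L(|\nabla u|^2)$ delivers the crucial good term $2\varphi'\sum_i F^{ii}\sum_k(\nabla_i\nabla_k u)^2$ (plus third-order pieces), $\varphi'' F^{ij}\nabla_i(|\nabla u|^2)\nabla_j(|\nabla u|^2) \ge 0$, and $\psi' L u + \psi''\sum_i F^{ii}(\nabla_i u)^2$ — using $\sum_i F^{ii}\lambda_i = k f$ and, where needed, $\nabla W(x_0) = 0$ — is arranged to yield a negative term of order $\sum_i F^{ii}$. The critical relation $\frac{\nabla_i\mathfrak{h}_{11}}{\lambda_1} = -\varphi'\nabla_i(|\nabla u|^2) - \psi'\nabla_i u$ is then used to turn the remaining first- and third-order gradient expressions into controllable ones.

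The main obstacle is the term $\frac{1}{\lambda_1}\nabla_1\nabla_1(f^{1/k})$, and inside it $\frac{1}{\lambda_1}(f^{1/k})_{p_k p_l}\,\nabla_1\nabla_k u\,\nabla_1\nabla_l u$ coming from the dependence of $f$ on $\nabla u$: without convexity of $f$ in the gradient variable this has no sign, and even after the division by $\lambda_1$ it is only $O(\lambda_1)$, of the same order as the curvature and $(f^{1/k})_{p_k}$-terms. This is precisely the point addressed in \cite{CJ20}, whose strategy I would follow: first take the coefficient in $\psi$ large so that the negative $\sum_i F^{ii}$-term it generates absorbs every $O(\lambda_1)\cdot\sum_i F^{ii}$ contribution on the region where $|\nabla u|$ is bounded below (on the complementary region $\nabla(|\nabla u|^2)$ is small and the bad term is estimated directly); then play the good term $2\varphi'\sum_i F^{ii}\sum_k(\nabla_i\nabla_k u)^2$, which via $\nabla^2 u = \Delta u\, g + \chi - \mathfrak{h}$ records the spread of the eigenvalues of $\mathfrak{h}$, against the eigenvalue-gap and concavity terms, through the familiar case split — one case where a definite number of $\lambda_i$ are comparable to $\lambda_1$, the other where there is a spectral gap below $\lambda_1$ (handled by strict concavity of $\sigma_k^{1/k}$). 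Collecting all the estimates yields a differential inequality of the form $0 \ge \varepsilon\,\lambda_1\sum_i F^{ii} - C\sum_i F^{ii} - C$; since $\sum_i F^{ii} \ge \nu > 0$ this forces $\lambda_1(x_0) \le C$, and because $x_0$ maximizes $W$ it follows that $\lambda_1$, hence $\sup_{\overline M}|\nabla^2 u|$, satisfies \eqref{cj-18}.
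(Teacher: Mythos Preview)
Your outline follows the same overall architecture as the paper (maximum principle applied to a largest-eigenvalue quantity, gradient term in the test function, case split in the spirit of \cite{CJ20}), but there are two genuine gaps.

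First, your test function uses $\psi(u)$ rather than $b(\underline u - u)$. The paper's choice is not cosmetic: applying concavity of $\sigma_k^{1/k}$ to the \emph{admissible} $\underline u$ yields
\[
F^{ij}\nabla_{ij}(\underline u - u)\;\ge\;\epsilon_0\sum G^{ii}-C,
\]
a good term proportional to $\sum G^{ii}$ with a \emph{definite} coefficient. By contrast $\psi'\,L u = \psi'\,G^{ij}\nabla_{ij}u = \psi'\bigl(\Delta u\sum G^{ii}-kf+G^{ij}\chi_{ij}\bigr)$ drags in $\Delta u$, whose sign and size are not under control at this stage, so you cannot ``arrange'' it to produce $-\,\text{const}\cdot\sum F^{ii}$. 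You list the concavity inequality involving $\underline u$ as an ingredient, but it never enters your test function, so it plays no role in your maximum-principle computation.

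Second, and more importantly, the mechanism you describe for defeating the bad term $\frac{1}{\lambda_1}(f^{1/k})_{p_kp_l}\nabla_1\nabla_k u\,\nabla_1\nabla_l u$ is not the one that actually works here, and your endpoint inequality $0\ge \varepsilon\lambda_1\sum F^{ii}-C\sum F^{ii}-C$ is not what the argument produces. The paper works with $U=\nabla^2 u+\chi_1$ (not $\mathfrak h$), and in the case where all subleading eigenvalues satisfy $|U_{ii}|\le \epsilon U_{11}$ it exploits the \emph{special structure} of the operator (the relation $\eta_{jj}=\sum_{i\neq j}U_{ii}$) to show directly that
\[
\sum G^{ii}\;\ge\;c_0\,U_{11}^{\,k-1}.
\]
Combined with the $b(\underline u-u)$ term this gives a contribution $\tfrac12 b\epsilon_0 c_0\,U_{11}^{k-1}$, which beats the $C U_{11}$ coming from $\nabla_{11}\tilde f$ once $b$ is large. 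In the complementary case ($U_{22}>\epsilon U_{11}$ or $U_{nn}<-\epsilon U_{11}$) the quadratic good term $\delta F^{ii}U_{ii}^2$ already dominates. Your sketch (``spectral gap handled by strict concavity of $\sigma_k^{1/k}$'') does not capture this; strict concavity alone, without the explicit $\sum G^{ii}\ge c_0 U_{11}^{k-1}$ estimate, does not supply a term strong enough to absorb the $O(U_{11})$ contribution from the gradient dependence of $f$.
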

In this paper we consider the Dirichlet boundary condition
\begin{equation}
\label{cj-19}
u = \varphi \mbox{ on } \partial M.
\end{equation}
\begin{theorem}
\label{cj-thm-4}
Suppose there exists an admissible function $\ul u \in C^2 (\ol M)$ with $\underline{u} = \varphi$ on $\partial M$.
Let $u \in C^3 (M) \cap C^2 (\overline{M})$ be an admissible solution to \eqref{cj-16} and \eqref{cj-19} with $u \geq \underline{u}$ on $\overline{M}$.
Then, there exists a positive constant $C$ depending on $|u|_{C^1 (\overline{M})}$, $|\underline{u}|_{C^2 (\overline{M})}$, $|\varphi|_{C^4 (\partial M)}$,
$|f|_{C^1}$ and $\inf f$ such that
\begin{equation}
\label{cj-20}
\sup_{\partial M} |\nabla^2 u| \leq C.
\end{equation}
\end{theorem}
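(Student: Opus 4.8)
The plan is to reduce \eqref{cj-20} to a pointwise bound: fix an arbitrary $x_0\in\partial M$ and bound $|\nabla^2u(x_0)|$ by a constant of the required type. Choose a smooth local orthonormal frame $e_1,\dots,e_n$ near $x_0$ with $e_n$ the interior unit normal along $\partial M$, and fix a smooth extension of $\varphi$ to $\ol M$; it is convenient to use $\underline u$ itself, which is legitimate since $\underline u=\varphi$ on $\partial M$. Put $v=u-\underline u$, so $v\ge0$ on $\ol M$, $v=0$ on $\partial M$, $\nabla v(x_0)=\nabla_nv(x_0)\,e_n$ and $\nabla_nv(x_0)\ge0$; as $|u|_{C^1(\ol M)}$ and $|\underline u|_{C^2(\ol M)}$ are part of the data, $|\nabla v(x_0)|$ is controlled. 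We estimate separately the tangential-tangential entries $u_{\alpha\beta}(x_0)$ ($\alpha,\beta<n$), the mixed entries $u_{\alpha n}(x_0)$ ($\alpha<n$), and the double normal entry $u_{nn}(x_0)$.

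Tangential-tangential entries are handled by differentiating $u=\varphi$ on $\partial M$ twice along $\partial M$: since $v$ vanishes on $\partial M$, its tangential Hessian at $x_0$ equals $-\nabla_nv(x_0)$ times the second fundamental form of $\partial M$, so $u_{\alpha\beta}(x_0)=\underline u_{\alpha\beta}(x_0)-\nabla_nv(x_0)\,\mathrm{II}_{\alpha\beta}(x_0)$ and hence $|u_{\alpha\beta}(x_0)|\le C$. For the mixed entries, fix $\alpha_0<n$ and introduce a first-order operator $T$ near $x_0$ that is tangent to $\partial M$, agrees to first order with $\nabla_{\alpha_0}$ at $x_0$, and has correction coefficients vanishing at $x_0$; then $w:=T(u-\underline u)$ is bounded on the half-ball $M_\delta:=M\cap B_\delta(x_0)$, vanishes on $\partial M\cap B_\delta$, and satisfies $\nabla_nw(x_0)=u_{\alpha_0n}(x_0)+O(1)$. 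Let $F^{ij}=\partial\sigma_k/\partial W_{ij}$ (evaluated on $W[u]:=\Delta u\,g-\nabla^2u+\chi$) and $\mathcal F^{ij}:=(\mathrm{tr}_gF)\,g^{ij}-F^{ij}$; by \eqref{cj-14} the tensor $\mathcal F^{ij}$ is positive definite, with eigenvalues $\sum_{l\ne i}F^{ll}$, and $Lw:=\mathcal F^{ij}\nabla_i\nabla_jw-f_{p_k}\nabla_kw$ is the linearization of \eqref{cj-16}. Differentiating \eqref{cj-16} once along $T$ — for which $|f|_{C^1}$ suffices, and where, crucially, no convexity of $f$ in $\nabla u$ is invoked since we differentiate only once — a routine computation bounds $|Lw|$ on $M_\delta$ by a multiple of $1+\mathrm{tr}\,\mathcal F$. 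One then builds barriers $\Psi^{\pm}=A(\underline u-u)+B|x-x_0|^2$, with $A\gg B\gg1$, satisfying $\pm w\le\Psi^{\pm}$ on $\partial M_\delta$ and $L\Psi^{\pm}\le0$ in $M_\delta$; here \eqref{cj-12} together with the concavity \eqref{cj-13} makes $L(\underline u-u)$ sufficiently negative (in the refined sense of \cite{Guan99} when \eqref{cj-12} is non-strict) to dominate $L|x-x_0|^2$ and $|Lw|$. The maximum principle and comparison of interior normal derivatives at $x_0$ then give $|\nabla_nw(x_0)|\le C$, hence $|u_{\alpha_0n}(x_0)|\le C$.

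It remains to bound $u_{nn}(x_0)$, and I expect this to be the main obstacle. In the frame above, $W[u]_{nn}(x_0)=\sum_{\alpha<n}u_{\alpha\alpha}(x_0)+\chi_{nn}(x_0)$ is already controlled by the tangential estimate; $W[u]_{\alpha n}(x_0)=-u_{\alpha n}(x_0)+\chi_{\alpha n}(x_0)$ is controlled by the mixed estimate; and $W[u]_{\alpha\alpha}(x_0)=u_{nn}(x_0)+O(1)$, $W[u]_{\alpha\beta}(x_0)=O(1)$ for $\alpha\ne\beta<n$. Thus $W[u](x_0)=u_{nn}(x_0)\,\pi+B$, where $\pi$ is the orthogonal projection of $T_{x_0}M$ onto $T_{x_0}\partial M$ and $B$ is a controlled symmetric tensor. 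The lower bound $u_{nn}(x_0)\ge-C$ is immediate from $\sigma_1(\lambda(W[u]))=\mathrm{tr}_gW[u]>0$. For the upper bound, suppose $u_{nn}(x_0)$ is large; then $W[u](x_0)$ has $n-1$ eigenvalues in $[u_{nn}(x_0)-C,u_{nn}(x_0)+C]$ and a remaining eigenvalue in $[-C,C]$. If $k\le n-1$, a direct computation gives $\sigma_k(\lambda(W[u](x_0)))\to\infty$ as $u_{nn}(x_0)\to\infty$, contradicting $\sigma_k(\lambda(W[u]))=f\le\sup_{\ol M}f$; hence $u_{nn}(x_0)\le C$. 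The delicate case is $k=n$ (the $(n-1)$ Monge-Amp\`ere equation): here $\det W[u](x_0)=f$ only forces the smallest eigenvalue of $W[u](x_0)$ — which is $W[u]_{nn}(x_0)+O(u_{nn}(x_0)^{-1})$ — to tend to $0$, so one must prove the a priori bound $W[u]_{nn}(x_0)\ge c_0>0$ with $c_0$ controlled. Using $\underline u$ once more, from $v=0$ on $\partial M$ one has $W[u]_{nn}(x_0)=W[\underline u]_{nn}(x_0)-\nabla_nv(x_0)\,H(x_0)$ with $H$ the mean curvature of $\partial M$ with respect to $e_n$; and since $\underline u$ is admissible with $k=n$ (so $\lambda(W[\underline u])$ lies in the positive cone on all of $\ol M$), $W[\underline u]_{nn}(x_0)\ge\lambda_{\min}(W[\underline u](x_0))\ge c_0>0$. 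When $H(x_0)\le0$ this already yields $W[u]_{nn}(x_0)\ge c_0$, a contradiction; when $H(x_0)>0$ one must instead rule out $u_{nn}(x_0)\to\infty$ by a further barrier and maximum-principle argument near $x_0$ in the spirit of \cite{Guan99} (and the original treatment of Caffarelli--Nirenberg--Spruck), combining the already-established tangential and mixed bounds, the bound on $|\nabla u|$, and the subsolution. This $k=n$ double normal bound is where the substantive work lies; the remaining steps follow the now-standard scheme for Dirichlet problems of Hessian type, and combining \eqref{cj-18} with the resulting \eqref{cj-20} gives $|\nabla^2u|_{C^0(\ol M)}\le C$.
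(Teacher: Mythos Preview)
Your outline is along the right lines and matches the paper for the tangential--tangential entries and for the double normal when $k<n$. Two places need more care.

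\textbf{Mixed derivatives.} On a Riemannian manifold, commuting your tangential operator $T$ with $\nabla_{ij}$ produces terms of size $O(|\nabla^2 u|)$ (from derivatives of the coefficients of $T$ and from curvature), so the correct bound is $|\mathcal{L}w|\le C\big(1+\sum f_i|\lambda_i|+\sum F^{ii}\big)$, not $C(1+\mathrm{tr}\,\mathcal F)$. Your barrier $A(\underline u-u)+B\rho^2$ does not absorb the $\sum f_i|\lambda_i|$ term. The paper uses the stronger barrier $\varPsi=A_1v+A_2\rho^2-A_3\sum_{\beta<n}|\nabla_\beta(u-\varphi)|^2$ with $v=(u-\underline u)+td-\tfrac{N}{2}d^2$; the quadratic-in-first-derivatives piece is precisely what generates the extra negativity needed in \eqref{eq3-5}. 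Also note that the hypothesis of the theorem is only that $\underline u$ is \emph{admissible}; \eqref{cj-12} is not assumed, and the paper derives \eqref{cj-23} from admissibility and concavity alone, so you should not invoke \eqref{cj-12}.

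\textbf{Double normal, $k=n$.} You correctly identify that the issue is a uniform lower bound $W[u]_{nn}(x_0)\ge c_0$, but your case split on the sign of $H(x_0)$ and the vague appeal to ``a further barrier \dots\ near $x_0$'' misses the key idea. The paper does \emph{not} argue pointwise at $x_0$: it considers the function $\psi(x):=\mathrm{tr}_{\tilde g}(\tilde\nabla\varphi-\nabla_\nu u\,\varPi+\tilde\chi_1)$ on all of $\partial M$ and moves to its global minimum $y_0$. Assuming $\psi(y_0)$ is small, the tangential identity \eqref{hess-a200} applied at \emph{every} nearby boundary point (together with the fact that $\sum_{\alpha<n}\underline U_{\alpha\alpha}\ge \tilde c$ from admissibility of $\underline u$) gives a pointwise upper bound $\nabla_n(u-\underline u)\le\varPhi$ on $\partial M$ near $y_0$, with $\varPhi$ a smooth function; this furnishes boundary data for a barrier of the type above, bounding $\nabla_{nn}u(y_0)$. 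With $\lambda(U)(y_0)$ thus confined to a compact subset of $\Gamma$, the equation \eqref{eqn1} forces $\psi(y_0)\ge c_0$. The global--minimum reduction (in the spirit of Trudinger and Caffarelli--Nirenberg--Spruck) is the substantive step you have not supplied.
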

%According to Theorem 5.1 in \cite{Guan99} or Theorem 4.3 in \cite{GJ16},
Since the function $f$ may depend on the gradient of $u$, to obtain the gradient estimates, we need the following growth condition
\begin{equation}
\label{cj-21}
\begin{aligned}
p \cdot \nabla_p f^{1/k} (x, z, p) \leq \,& \bar{f} (x, z) (1 + |p|^{\gamma_1})\\
p \cdot \nabla_x f^{1/k} (x, z, p) + |p|^2 f^{1/k}_z (x, z, p) \geq \,& - \bar{f} (x, z) (1 + |p|^{\gamma_2})
\end{aligned}
\end{equation}
when $|p|$ is sufficiently large, where $\gamma_1 < 2$, $\gamma_2 < 4$ are positive constants and $\bar{f}$ is
a positive continuous function of $(x, z) \in \overline{M} \times \mathbb{R}$.
Based on the \emph{a priori} estimates, in particular, Theorem \ref{cj-thm-3} and Theorem \ref{cj-thm-4}, we can prove the existence
and regularity result as follows.
\begin{theorem}
\label{cj-thm-5}
Suppose there exists an admissible function $\underline{u} \in C^2 (\overline{M})$ satisfying
\eqref{cj-12} and $\underline{u} = \varphi$ on $\partial M$.
Assume that $f > 0$ satisfies \eqref{cj-21} and
\begin{equation}
\label{cj-22}
\sup_{(x, z, p) \in T^* M \times \mathbb{R}} \frac{- f_z (x, z, p)}{f (x, z, p)} < \infty.
\end{equation}
Then there exists an admissible solution $u \in C^\infty (\overline{M})$ of \eqref{cj-16} and \eqref{cj-19}.
Moreover, the solution is unique if $f_z \geq 0$.
\end{theorem}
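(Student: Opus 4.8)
The plan is to derive Theorem \ref{cj-thm-5} from the \emph{a priori} estimates by the method of continuity, reinforced with a topological degree argument to cope with the fact that $f$ need not be nondecreasing in $z$. Write $W[u] = \Delta u\, g - \nabla^2 u + \chi$, so that \eqref{cj-16} reads $\sigma_k(\lambda(W[u])) = f(x,u,\nabla u)$. The first task is to assemble a complete set of bounds for admissible solutions with $u \ge \ul u$ and $u = \vp$ on $\partial M$: a $C^0$ bound, a $C^1$ bound, then $\sup_{\ol M}|\nabla^2 u| \le C$ from Theorems \ref{cj-thm-3} and \ref{cj-thm-4}. Once $u$ is controlled in $C^2$, the eigenvalues $\lambda(W[u])$ lie in a fixed compact subset of $\Gamma_k$, so \eqref{cj-16} is uniformly elliptic, and by \eqref{cj-13} it is concave in $\nabla^2 u$ after taking $k$-th roots; the Evans--Krylov theorem then yields an interior $C^{2,\alpha}$ bound, the Krylov boundary theory extends it to $\ol M$, and differentiating the equation and iterating the Schauder estimates gives $|u|_{C^{m,\alpha}(\ol M)} \le C_m$ for all $m$, hence $u \in C^\infty(\ol M)$.

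For the $C^0$ estimate, the upper bound is essentially free: admissibility forces $\lambda(W[u]) \in \Gamma_1$, i.e. $(n-1)\Delta u + \mathrm{tr}_g\chi > 0$, so $u$ lies below the solution of a fixed Poisson problem with boundary data $\vp$, whence $\sup_{\ol M} u \le C$. The lower bound is where \eqref{cj-22} is used: it is equivalent to $e^{C_0 z} f(x,z,p)$ being nondecreasing in $z$, and combined with the subsolution inequality \eqref{cj-12} it produces $\inf_{\ol M} u \ge -C$ through a maximum-principle comparison at an interior minimum of $u - \ul u$. For the $C^1$ estimate, one first reduces the interior gradient bound to the boundary by a global Bernstein argument applied to $|\nabla u|^2 e^{\phi(u)}$ on $\ol M$: differentiating \eqref{cj-16}, the terms generated by $\nabla_p f$, $\nabla_x f$ and $f_z$ are precisely the ones controlled by \eqref{cj-21}, and the exponents $\gamma_1 < 2$, $\gamma_2 < 4$ are what make those terms absorbable; the boundary gradient bound then follows from barriers, using $\ul u$ (with $\ul u = \vp$ on $\partial M$ and $u \ge \ul u$) as a lower barrier and a standard construction from the distance function as an upper barrier.

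With the estimates in hand, run the continuity method on the family
\[
\sigma_k\big(\lambda(W[u])\big) = F_t(x,u,\nabla u) := t\, f(x,u,\nabla u) + (1-t)\,\sigma_k\big(\lambda(W[\ul u])(x)\big), \qquad t \in [0,1],
\]
with $u = \vp$ on $\partial M$. At $t = 0$ the admissible solution $u \equiv \ul u$ solves the problem; for every $t$, \eqref{cj-12} shows $\ul u$ remains a subsolution, so the $C^0$ comparison gives $u_t \ge \ul u$, and one verifies that \eqref{cj-21} and \eqref{cj-22} persist for $F_t$ with constants independent of $t$, so the $C^\infty$ bounds are uniform along the family. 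Closedness of the set of solvable $t$ then follows from these bounds and Arzel\`a--Ascoli. For openness, the linearization at a solution is a linear elliptic operator whose zeroth-order coefficient is $-t f_z$; when $f_z \ge 0$ this operator obeys the maximum principle and is invertible on $C^{2,\alpha}_0(\ol M)$, so the implicit function theorem closes the argument at once. In general one recasts the continuation as a Leray--Schauder degree computation: the uniform estimates confine every solution to a fixed ball, the degree is invariant in $t$, and at $t = 0$ the linearization (with vanishing zeroth-order term) is an isomorphism, so the degree equals $\pm 1 \ne 0$ and hence stays nonzero at $t = 1$, giving a solution of \eqref{cj-16} and \eqref{cj-19}.

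Finally, when $f_z \ge 0$ the difference $w = u - v$ of two admissible solutions with the same boundary data satisfies a linear elliptic equation with nonpositive zeroth-order coefficient and $w = 0$ on $\partial M$, so the maximum principle forces $w \equiv 0$, proving uniqueness. The step I expect to be the main obstacle is the interior gradient estimate: pinning down the exact cancellations in the Bernstein argument so that it closes for the full range $\gamma_1 < 2$, $\gamma_2 < 4$ in \eqref{cj-21} is delicate, and a secondary technical point is that, since $f$ is not assumed monotone in $z$, openness in the continuity scheme has to be obtained via degree theory rather than the implicit function theorem.
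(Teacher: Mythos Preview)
Your outline matches the paper's own approach, which is essentially a pointer to Evans--Krylov, Schauder, and ``standard arguments using the continuity method and degree theory'' from \cite{Guan99} and \cite{CNS84}. There is, however, one genuine gap: the assertion that ``the $C^0$ comparison gives $u_t \ge \ul u$'' along your continuity path does not follow from the maximum principle as written. At an interior minimum $x_0$ of $u-\ul u$ one indeed gets $W[u](x_0)\ge W[\ul u](x_0)$ and hence $F_t(x_0,u,\nabla u)\ge F_t(x_0,\ul u,\nabla u)$, but when $(F_t)_z<0$ this inequality is \emph{consistent} with $u(x_0)<\ul u(x_0)$ and produces neither a contradiction nor a lower bound. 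The same objection applies to your earlier claim that \eqref{cj-22} together with \eqref{cj-12} yields $\inf_{\ol M}u\ge -C$ by a direct comparison at an interior minimum; the monotonicity of $e^{C_0 z}f$ only gives $f(x,u,p)\le e^{C_0(\ul u-u)}f(x,\ul u,p)$, which combined with $f(x,u,p)\ge f(x,\ul u,p)$ says nothing about the size of $\ul u-u$.

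This matters because the boundary second-order estimate (Theorem~\ref{cj-thm-4}) and the boundary gradient bound both explicitly require $u\ge\ul u$, so without it the closedness step of your continuity scheme is unjustified. The actual role of \eqref{cj-22} in \cite{Guan99} is precisely to repair this comparison: since $(\log f)_z\ge -C_0$, one deforms through a family whose right-hand side carries an additional factor of the type $e^{\lambda(u-\ul u)}$ with $\lambda>C_0$ (or an equivalent monotone modification), so that the modified right-hand side is strictly increasing in $u$; the interior-minimum argument then does force $u_t\ge\ul u$, all the a~priori estimates become available uniformly in $t$, and your degree computation goes through. Once this step is inserted, the rest of your plan---uniform $C^{2,\alpha}$ via Evans--Krylov, bootstrapping, degree invariance from the linearization at $t=0$, and uniqueness from the maximum principle when $f_z\ge 0$---is correct and coincides with the paper.
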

\begin{remark}
The function $\ul u$ does not need to be a subsolution to derive the
second order estimates in Theorem \ref{cj-thm-3} and Theorem \ref{cj-thm-4}. The conditions that
$\ul u$ is a subsolution (satisfying \eqref{cj-12}) and \eqref{cj-22} are used to derive the existence of solutions
to \eqref{cj-16} and \eqref{cj-19} by the continuity method and degree theory as in \cite{Guan99}.
%When the function $f$ depends on the gradient of the solution, $\nabla u$, we usually need some growth conditions
%such as \eqref{cj-21} to establish the gradient estimates (seeing \cite{Guan99} and \cite{GJ16}).
\end{remark}

The general Hessian type equations as well as their Dirichlet problems and other boundary value problem were studied by many people.
We refer the reader to \cite{ CNS3, CW01, Guan94, Guan99, Guan14, GJ15, GJ16, Ivochkina80, LiYY90, Trudinger95, TW99, Urbas02, W94,
JT18, JT20}
and the reference therein for previous studies.

The equation \eqref{cj-16} is related to the $k$-Hessian equation
\begin{equation}
\label{cj-3}
\sigma_k (\lambda (\nabla^2 u + \chi)) = f (x, u, \nabla u).
\end{equation}
It is an interesting question to establish the second order estimates for the equation \eqref{cj-3} without the
condition that $f$ is convex in $\nabla u$. This is true for the special cases $k=2$, $k=n-1$ and $k=n-2$
according to \cite{GRW15}, \cite{RW19} and \cite{RW02} respectively. It is of interest to consider the general $k$.

\bigskip

{\bf Acknowledgement.}
The first author wish to thank Jianchun Chu for many helpful discussions and comments.
The first author is supported by the National Natural Science Foundation of China (Grant Nos. 11601105, 11871243 and 11671111)
and the Natural Science Foundation of Heilongjiang Province (Grant No. LH2020A002).

\section{Preliminaries}
In this section, we provide some preliminaries which may be used in the following sections.
Throughout the paper let $\nabla$ denote the Levi-Civita connection
of $(M, g)$. The curvature tensor is defined by
\[ R (X, Y) Z = - \nabla_X \nabla_Y Z + \nabla_Y \nabla_X Z
                 + \nabla_{[X, Y]} Z. \]
Under a local frame $e_1, \ldots, e_n$ on $M$ we write
$g_{ij} = g (e_i, e_j)$, $\{g^{ij}\} = \{g_{ij}\}^{-1}$, while
the Christoffel symbols $\Gamma_{ij}^k$ and curvature
coefficients are given respectively by
$\nabla_{e_i} e_j = \Gamma_{ij}^k e_k$ and
\[  R_{ijkl} = g( R (e_k, e_l) e_j, e_i), \;\; R^i_{jkl} = g^{im} R_{mjkl}.  \]
We shall write  $\nabla_i = \nabla_{e_i}$,
$\nabla_{ij} = \nabla_i \nabla_j - \Gamma_{ij}^k \nabla_k $, etc.

For any $v \in C^4 (\ol M)$, we usually
identify $\nabla v$ with its gradient, and use
$\nabla^2 v$ to denote its Hessian which is locally given by
$\nabla_{ij} v = \nabla_i (\nabla_j v) - \Gamma_{ij}^k \nabla_k v$.
We recall that $\nabla_{ij} v =\nabla_{ji} v$ and
\begin{equation}
\label{hess-A70}
 \nabla_{ijk} v - \nabla_{jik} v = R^l_{kij} \nabla_l v,
\end{equation}
\begin{equation}
\label{hess-A80}
\begin{aligned}
\nabla_{ijkl} v - \nabla_{klij} v
= \,& R^m_{ljk} \nabla_{im} v + \nabla_i R^m_{ljk} \nabla_m v
      + R^m_{lik} \nabla_{jm} v \\
  & + R^m_{jik} \nabla_{lm} v
      + R^m_{jil} \nabla_{km} v + \nabla_k R^m_{jil} \nabla_m v.
\end{aligned}
\end{equation}
For $\lambda \in \mathbb{R}^n$ let
\[
\mu_i = \sum_{j \neq i} \lambda_j, i = 1, \ldots, n.
\]
Define the function $h (\lambda)$ on the cone
\[
\Gamma := \{\lambda \in \mathbb{R}^n: (\mu_1, \ldots, \mu_n) \in \Gamma_k\}
\]
by
\begin{equation}
\label{def-h}
h (\lambda) := \sigma_k^{1/k} (\mu_1, \ldots, \mu_n).
\end{equation}
Let
\begin{equation}
\label{def-chi}
\chi_1 := \frac{\tr g (\chi)}{n-1} g - \chi.
\end{equation}
We find the equation \eqref{cj-16} can be rewritten by
\begin{equation}
\label{cj-8}
F (\nabla^2 u + \chi_1) := h (\lambda (\nabla^2 u + \chi_1)) = \tilde{f} (x, u, \nabla u) := f^{1/k} (x, u, \nabla u)
\end{equation}
and a function $u \in C^2 (M)$ is admissible if and only if $\lambda (\nabla^2 u + \chi_1) \in \Gamma$
in $M$.
It is easy to derive from \eqref{cj-14} and \eqref{cj-13} that
\begin{equation}
\label{cj-10}
h_i (\lambda) = \frac{\partial h (\lambda)}{\partial \lambda_i} > 0, \mbox{ in } \Gamma, i = 1, \ldots, n
\end{equation}
and
\begin{equation}
\label{cj-9}
h(\lambda) \mbox{ is concave in } \Gamma.
\end{equation}
Furthermore, $h > 0$ in $\Gamma$ and $h = 0$ on $\partial \Gamma$.
\begin{lemma}
\label{cj-7}
There exists a positive constant $\nu_0$ depending only on $n$ and $k$ such that
\begin{equation}
\label{cj-6}
h_j (\lambda) \geq \nu_0 \big(1 + \sum_i h_i (\lambda)\big), \mbox{ if } \lambda_j < 0, \forall \lambda \in \Gamma.
\end{equation}
\end{lemma}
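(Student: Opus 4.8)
The plan is to relate the derivatives $h_j(\lambda)$ of the ``conjugated'' function $h$ to those of $\sigma_k^{1/k}$, exploiting the linear change of variables $\lambda \mapsto \mu$, $\mu_i = \sum_{j\neq i}\lambda_j$. Writing $P$ for this (symmetric, invertible) linear map, we have $h(\lambda) = \sigma_k^{1/k}(P\lambda)$, so by the chain rule $h_j(\lambda) = \sum_{i\neq j} \partial_i \sigma_k^{1/k}(\mu)$, i.e. $h_j(\lambda) = \sum_i \partial_i\sigma_k^{1/k}(\mu) - \partial_j\sigma_k^{1/k}(\mu)$. Set $S := \sum_i \partial_i\sigma_k^{1/k}(\mu) > 0$; then $h_j(\lambda) = S - \partial_j\sigma_k^{1/k}(\mu)$ and $\sum_i h_i(\lambda) = (n-1)S$. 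So \eqref{cj-6} is equivalent to showing that $S - \partial_j \sigma_k^{1/k}(\mu) \geq \nu_0(1 + (n-1)S)$, i.e. that $\partial_j\sigma_k^{1/k}(\mu)$ is bounded above by a definite fraction of $S$ when $\lambda_j < 0$.

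The key observation is that $\lambda_j < 0$ forces $\mu_j = \sigma_1(\mu)/(n-1) \cdot(\text{something})$ — more precisely, $\mu_j - \mu_i = \lambda_i - \lambda_j$, so $\mu_j = \frac{1}{n}\big(\sigma_1(\mu) - (n-1)\lambda_j\big)$ wait, let me instead argue directly: since $\mu_i - \mu_j = \lambda_j - \lambda_i$ we get, summing over $i$, that $n\mu_j - \sigma_1(\mu) = (n-1)\lambda_j - (\sigma_1(\lambda) - \lambda_j) = n\lambda_j - \sigma_1(\lambda)$; hence $\mu_j = \lambda_j - \sigma_1(\lambda) + \sigma_1(\mu)$, and since $\sigma_1(\mu) = (n-1)\sigma_1(\lambda)$ this gives $\mu_j = \lambda_j + (n-2)\sigma_1(\lambda)$. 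That identity is not by itself sign-definite, so the cleaner route is: $\lambda_j < 0$ implies $\mu_j = \max_i \mu_i$ is not the right statement either. I would instead use that $\lambda_j < 0$ means $\mu_j \geq \mu_i$ for... no: $\mu_j - \mu_i = \lambda_i - \lambda_j$, so if $\lambda_j$ is the \emph{smallest} entry then $\mu_j$ is the \emph{largest}. In general $\lambda_j<0$ only tells us $\mu_j > \sigma_1(\mu)/(n-1)\cdot$, hmm — actually it tells us $\mu_j = \sigma_1(\lambda) - \lambda_j \cdot 0$... Let me just say: $\mu_j - \frac{\sigma_1(\mu)}{n} = \frac{1}{n}\sum_i(\mu_j - \mu_i) = \frac{1}{n}\sum_i(\lambda_i - \lambda_j) = \frac{\sigma_1(\lambda)}{n} - \lambda_j$. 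Since $\sigma_1(\lambda) = \frac{1}{n-1}\sigma_1(\mu)$, $\mu_j > \frac{\sigma_1(\mu)}{n-1}$ precisely when $\lambda_j < \frac{\sigma_1(\lambda)}{n} \cdot \frac{1}{n-1}\cdots$; in any case, $\lambda_j < 0 \Rightarrow \mu_j > \frac{\sigma_1(\mu)}{n-1} \geq \frac{\sigma_1(\mu)}{n-1}$, and more usefully $\mu_j$ exceeds the average of the $\mu_i$. For $\sigma_k^{1/k}$ restricted to $\Gamma_k$ this is exactly the regime where the partial derivative in the large direction is \emph{smallest}: a standard fact is $\partial_i\sigma_k^{1/k}(\mu) = \frac{1}{k}\sigma_k^{1/k-1}\sigma_{k-1;i}(\mu)$, and $\sigma_{k-1;i}$ is decreasing in $\mu_i$, so the index $j$ with the largest $\mu_j$ has the smallest $\partial_j\sigma_k^{1/k}$; combined with $\sum_i \partial_i\sigma_k^{1/k} = S$ this yields $\partial_j\sigma_k^{1/k}(\mu) \leq S/n$, and then $h_j(\lambda) \geq (1 - 1/n)S = \frac{n-1}{n}S$.

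To finish, I need the ``$1+$'' in \eqref{cj-6}, i.e.\ a lower bound on $h_j$ that does not degenerate when $S$ is small. Here I would use that $h$ is concave, positive in $\Gamma$, and homogeneous of degree one, so $\sum_i h_i(\lambda)\lambda_i = h(\lambda) > 0$ and the gradient $Dh$ is bounded below away from the origin in a suitable sense; more concretely, since $h(\lambda)\le h_j(\lambda)\cdot(\text{distance to }\{\lambda_j\to-\infty\})$ — or simply because on the slice $\{\sigma_1(\mu)=1\}\cap\Gamma_k$ one has $\sigma_{k-1;j}\ge c(n,k)>0$ uniformly — we get $\partial_j\sigma_k^{1/k}(\mu)$, hence $S$, hence $h_j$, bounded below by a positive constant depending only on $n,k$ (using 1-homogeneity to reduce to that slice, and the fact that $h_j = S - \partial_j\sigma_k^{1/k}$ together with $\partial_j\sigma_k^{1/k} \le S/n$ forces $h_j \ge \frac{n-1}{n}S \ge c'(n,k)$ there). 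Taking $\nu_0 = \nu_0(n,k)$ small enough to absorb both the ``$1$'' and the $\frac{n-1}{n}S$ versus $(n-1)\nu_0 S$ comparison then gives \eqref{cj-6}. \textbf{The main obstacle} I anticipate is making the bound $\partial_j \sigma_k^{1/k}(\mu) \le S/n$ fully rigorous: the elementary-symmetric-function inequalities (Newton–Maclaurin type, and the monotonicity of $\sigma_{k-1;j}$ under the ordering induced by the $\mu_i$) need to be invoked carefully on the cone $\Gamma_k$, and one must check that $\lambda_j<0$ genuinely places $\mu_j$ in the ``top'' group of indices rather than merely above average — a homogeneity/ordering argument that I expect to take the bulk of the work, while the passage back from $\sigma_k$ to $h$ and the extraction of the additive constant are routine.
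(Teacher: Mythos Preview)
Your overall setup matches the paper's exactly: write $\tilde h_i := \partial_i \sigma_k^{1/k}(\mu)$, so that $h_j = S - \tilde h_j$ with $S = \sum_i \tilde h_i$ and $\sum_i h_i = (n-1)S$, and then try to bound $\tilde h_j$ above by a definite fraction of $S$. The obstacle you flag is genuine, however, and it is precisely where your argument breaks: $\lambda_j < 0$ does \emph{not} force $\mu_j$ to be the largest coordinate, only that $\mu_j$ exceeds $\mu_l$ for \emph{some} $l$ (take any $l$ with $\lambda_l > 0$, which exists since $\Gamma \subset \Gamma_1$, and use $\mu_j - \mu_l = \lambda_l - \lambda_j > 0$). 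So the bound $\tilde h_j \le S/n$ is not available in general. The paper sidesteps this by aiming for the weaker but sufficient inequality $\tilde h_j \le S/2$: from $\mu_j > \mu_l$ one gets $\tilde h_j \le \tilde h_l$, and since $\tilde h_l$ is one of the summands in $h_j = \sum_{s\neq j}\tilde h_s$ this gives $h_j \ge \tilde h_l \ge \tilde h_j$, hence $2h_j \ge h_j + \tilde h_j = S$. That single comparison index is all you need; trying to place $\mu_j$ at the very top is both unnecessary and false.

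Your handling of the additive ``$1+$'' is also shakier than it needs to be. The derivatives of $\sigma_k^{1/k}$ are $0$-homogeneous, so normalizing to the slice $\{\sigma_1(\mu)=1\}$ buys nothing, and that slice is noncompact in $\Gamma_k$, so a uniform lower bound on $\sigma_{k-1;j}$ there fails. The direct route (and the paper's) is Maclaurin's inequality applied to $S$ itself: $S = \tfrac{n-k+1}{k}\,\sigma_k^{1/k-1}\sigma_{k-1}(\mu) \ge C_{n,k}$ for all $\mu \in \Gamma_k$. Combining $h_j \ge S/2$ with $S \ge C_{n,k}$ and $\sum_i h_i = (n-1)S$ immediately yields \eqref{cj-6}.
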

\begin{proof}
Suppose
\[
\tilde{h}_i (\lambda) = \frac{\partial \sigma_k^{1/k}}{\partial \mu_i}, i = 1, \ldots, n.
\]
It is easy to find
\[
h_i (\lambda) = \sum_{l \neq i} \tilde{h}_l (\lambda), i = 1, \ldots, n
\]
and by Maclaurin's inequality,
\[
\sum_{i=1}^n \tilde{h}_i (\lambda) \geq C_{n, k}
\]
for some positive constant $C_{n, k}$ depending only on $n$ and $k$.
If $\lambda_j < 0$, there exists an index $l \neq j$ such that $\lambda_l > 0$ since $\Gamma \subset \Gamma_1$.
Thus, $\mu_j > \mu_l$ and since $\mu \in \Gamma_k$, we have
\[
\sigma_{k-1; j} (\mu) \leq \sigma_{k-1; l} (\mu).
\]
%where
%\[
%\sigma_{k-1; \mu_i} (\mu) = \sigma_{k-1} (\mu) \mid_{\mu_i = 0}, \ i = 1, \ldots, n.
%\]
Hence
$\tilde{h}_j (\lambda) = \frac{1}{k}\sigma_k (\mu)^{1/k - 1}\sigma_{k-1; \mu_j} (\mu) \leq
\frac{1}{k}\sigma_k (\mu)^{1/k - 1}\sigma_{k-1; \mu_l} (\mu) = \tilde{h}_l (\lambda)$. It follows that
\[
\begin{aligned}
2 h_j (\lambda) = 2 \sum_{s \neq j} \tilde{h}_s (\lambda) \geq \,& \sum_{s \neq j} \tilde{h}_s (\lambda)
  + \tilde{h}_l (\lambda) \\
  \geq \,& \sum_{s \neq j} \tilde{h}_s (\lambda) + \tilde{h}_j (\lambda) = \sum_{s=1}^n \tilde{h}_s (\lambda).
\end{aligned}
\]
Therefore,
\[
h_j (\lambda) \geq \frac{1}{2} \sum_{s=1}^n \tilde{h}_s (\lambda)
   \geq \frac{1}{4 (n-1)} \sum_{i=1}^n h_i (\lambda) + \frac{C_{n,k}}{4}
\]
and \eqref{cj-6} is proved.
\end{proof}
It is also easy to see
\begin{equation}
\label{cj-5}
\sum h_i (\lambda)\lambda_i = \sum \tilde{h}_i (\lambda) \mu_i = h (\lambda) > 0 \mbox{ in } \Gamma.
\end{equation}
\begin{lemma}
\label{cj-1}
For any constant $A > 0$ and any compact set $K$ in $\Gamma$ there is a number $R = R (A, K)$ such that
\begin{equation}
\label{cj-2}
h (\lambda_1, \ldots, \lambda_{n-1}, \lambda_n + R) \geq A, \mbox{ for all } \lambda \in K.
\end{equation}
\end{lemma}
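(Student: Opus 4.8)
The plan is to transfer the shift $\lambda_n \mapsto \lambda_n + R$ into the $\mu$-variables and then use the degree-one homogeneity and concavity of $\sigma_k^{1/k}$. Since $\mu_i = \sum_{j \neq i}\lambda_j$, replacing $\lambda_n$ by $\lambda_n + R$ replaces $\mu_i$ by $\mu_i + R$ for $i = 1,\dots,n-1$ while leaving $\mu_n$ unchanged. Hence, writing $w := (1,\dots,1,0) \in \mathbb{R}^n$ and $\mu(\lambda) := (\mu_1(\lambda),\dots,\mu_n(\lambda))$, the definition \eqref{def-h} gives
\[
h(\lambda_1,\dots,\lambda_{n-1},\lambda_n + R) = \sigma_k^{1/k}\bigl(\mu(\lambda) + R\,w\bigr).
\]
Because $\mu$ is a fixed linear map, $K' := \mu(K)$ is a compact subset of $\Gamma_k$, and since $\sigma_j(w) = \binom{n-1}{j} \ge 0$ for every $j$ we have $w \in \overline{\Gamma_k}$; as $\Gamma_k$ is an open convex cone, $\mu + R\,w \in \Gamma_k$ for all $\mu \in K'$ and all $R \ge 0$, so the right-hand side is well defined and the shifted point $(\lambda_1,\dots,\lambda_{n-1},\lambda_n+R)$ again lies in $\Gamma$. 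I expect this last bit of bookkeeping to be the only genuine obstacle; everything else is the scaling behaviour of $\sigma_k^{1/k}$ along the ray $R \mapsto \mu + R\,w$.

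For the quantitative bound I would first treat $k \le n-1$. The function $\sigma_k^{1/k}$ is concave on $\overline{\Gamma_k}$ (by \eqref{cj-13} together with continuity) and positively homogeneous of degree one, hence superadditive there, so
\[
\sigma_k^{1/k}\bigl(\mu + R\,w\bigr) \ge \sigma_k^{1/k}(\mu) + \sigma_k^{1/k}(R\,w) = \sigma_k^{1/k}(\mu) + R\,\binom{n-1}{k}^{1/k} \ge \binom{n-1}{k}^{1/k}\,R,
\]
and it suffices to take $R \ge A\binom{n-1}{k}^{-1/k}$ — a bound that in fact does not depend on $K$ at all. The only exceptional value is $k = n$, where $\sigma_n^{1/n}(w) = 0$ and the inequality above degenerates; there I would argue directly, using that every $\mu \in K' \subset \Gamma_n$ has all components positive: with $c_0 := \min_{\mu \in K'}\mu_n > 0$ one has
\[
h(\lambda_1,\dots,\lambda_{n-1},\lambda_n+R) = \Bigl(\mu_n \prod_{i=1}^{n-1}(\mu_i+R)\Bigr)^{1/n} \ge \bigl(c_0\,R^{\,n-1}\bigr)^{1/n} \longrightarrow \infty \quad (R \to \infty),
\]
so $R = R(A,K)$ can be chosen through $c_0$.

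Alternatively, one can avoid the case split altogether by expanding $\sigma_k(\mu + R\,w)$ as a polynomial in $R$: when $k \le n-1$ its leading term is $\binom{n-1}{k}R^{k}$, and when $k = n$ it equals $\mu_n\prod_{i<n}(\mu_i+R)$ with leading term $\mu_n R^{\,n-1}$, where $\mu_n$ is bounded below by a positive constant on $K'$; in either case all lower-order coefficients are $\sigma$-polynomials in $\mu$, hence uniformly bounded on the compact set $K'$, so $\sigma_k(\mu + R\,w) \to \infty$ uniformly over $\mu \in K'$ as $R \to \infty$, which is exactly \eqref{cj-2}. In any presentation the substance reduces to the homogeneity/concavity structure of $\sigma_k^{1/k}$ plus the verification that the ray stays inside $\Gamma_k$, with the $k=n$ endpoint requiring separate (but trivial) attention because $w \in \partial\Gamma_n$.
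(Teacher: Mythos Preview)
Your argument is correct. The transfer to $\mu$-variables, the observation that $w=(1,\dots,1,0)\in\overline{\Gamma_k}$, and the superadditivity bound for $k\le n-1$ together with the direct product estimate for $k=n$ are all valid; the polynomial-expansion alternative also works.

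The paper takes a slightly different, unified route that avoids your case split. After passing to the compact image $K'=\mu(K)\subset\Gamma_k$, it drops all but one of the $R$-shifts via the monotonicity of $\sigma_k$ in $\Gamma_k$:
\[
\sigma_k(\mu_1+R,\dots,\mu_{n-1}+R,\mu_n)\ \ge\ \sigma_k(\mu_1+R,\mu_2,\dots,\mu_n)
= \sigma_k(\mu)+R\,\sigma_{k-1;1}(\mu)\ \ge\ R\,a_0,
\]
using that $\sigma_{k-1;1}(\mu)>0$ on $\Gamma_k$ (this is \eqref{cj-14}) and hence is bounded below by some $a_0>0$ on $K'$. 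This handles every $k$ at once, including $k=n$, at the cost of a $K$-dependent constant $a_0$. Your superadditivity argument, by contrast, yields a $K$-independent $R$ when $k\le n-1$, which is a small bonus, but forces the separate treatment of $k=n$ since $\sigma_n^{1/n}(w)=0$. Either approach is perfectly adequate for the lemma.
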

\begin{proof}
First there exists a compact set $K' \subset \Gamma_k$ such that
\[\mu = (\sum_{j \neq 1} \lambda_j, \ldots, \sum_{j \neq n} \lambda_j) \in K' \mbox{ for all } \lambda \in K.
\]
Next, for $R > 0$ we have
\[
\begin{aligned}
h^k (\lambda_1, \ldots, \lambda_{n-1}, \lambda_n + R) = \,& \sigma_k (\mu_1 + R, \ldots, \mu_{n-1} + R, \mu_n)\\
 \geq \,& \sigma_k (\mu_1 + R, \mu_2 \ldots, \mu_n)\\
  = \,& (\mu_1 + R) \sigma_{k-1} (\mu_2, \ldots, \mu_n) + \sigma_k (\mu_2, \ldots, \mu_n)\\
 = \,& \sigma_k (\mu) + R \sigma_{k-1} (\mu_2, \ldots, \mu_n) \geq R a_0
\end{aligned}
\]
for some positive constant $a_0$ depending on $K'$. \eqref{cj-2} follows when $R$ is sufficiently large.
\end{proof}
Next, we have (cf. e.g. \cite{W94} and \cite{HMW10}),
\[
\prod_{i=1}^n \sigma_{k-1; i} (\mu) \geq \frac{k^n}{n^n} (C_n^k)^{n/k} [\sigma_k (\mu)]^{n(k-1)/k}
  \mbox{ for any } \mu \in \Gamma_k.
\]
Therefore, for any positive constants $f_1, f_2$ with $0 < \tilde{f}_1 < \tilde{f}_2 < \infty$ there
exists a positive constant $c_0$ depending only on $n$, $k$, $\tilde{f}_1$ and $\tilde{f}_2$ such that
\begin{equation}
\label{cj-4}
\prod_{i=1}^n h_i (\lambda) \geq \prod_{i=1}^n \tilde{h}_i (\lambda) \geq c_0
\end{equation}
for any $\lambda \in \Gamma_{\tilde{f}_1, \tilde{f}_2} := \{\lambda \in \Gamma: \tilde{f}_1 \leq h (\lambda) \leq \tilde{f}_2\}$.

In the following sections, $u$ will denote an admissible solution of \eqref{cj-16} and \eqref{cj-19} with
$u \geq \ul u$ in $M$. For simplicity we denote $U := \nabla^2 u + \chi_1$
and, under a local frame $e_1, \ldots, e_n$,
\[ U_{ij} := U (e_i, e_j) = \nabla_{ij} u + (\chi_1)_{ij}. \]
Denote $G (X) =
\sigma_k^{1/k} (\lambda (X))$ for a $(0, 2)$-tensor field
$X$ on $\overline{M}$. Let
$\eta = \Delta u g - \nabla^2 u + \chi$ and locally
$\eta_{ij} = (g^{pq} \nabla_{pq} u) \delta_{ij} - g^{il} \nabla_{lj} u + g^{il}\chi_{lj}$.
Throughout this paper we denote
\[
G^{ij} = \frac{\partial G}{\partial \eta_{ij}} \mbox{ and } F^{ij} = \frac{\partial F}{\partial U_{ij}}.
\]
Then we have
\begin{equation}
\label{Fij}
F^{ij} = \sum_l G^{ll} g^{ij} - g^{il} G^{lj}.
\end{equation}

\section{A remark for the \emph{a priori} $C^1$ estimates}

We first note that $\Gamma_k \subset \Gamma_1$. Let $u \in C^2 (M) \cap C^0 (\bM)$ be an admissible solution of \eqref{cj-16} and
\eqref{cj-19} with $u \geq \ul u$, we have
\[
\Delta u + \tr g (\chi) > 0 \mbox{ in } M.
\]
Let $\phi$ be the solution of
\[
\Delta \phi + \tr g (\chi) = 0 \mbox{ in } M
\]
with $\phi = \varphi$ on $\partial M$. By the maximum principle, we have
\[
\ul u \leq u \leq \phi.
\]
Therefore, we obtain
\begin{equation}
\label{cj-11}
\sup_{\bM} |u| + \sup_{\partial M} |\nabla u| \leq C,
\end{equation}
where the positive constant $C$ depends only on $|\ul u|_{C^1 (\bM)}$ and $|\phi|_{C^1 (\bM)}$.

\begin{theorem}
\label{gradient}
Let $u \in C^3 (M) \cap C^1 (\ol M)$ be an admissible solution of \eqref{cj-16}. Suppose $f$ satisfies \eqref{cj-21}. Then we have
\begin{equation}
\label{gradient-1}
\sup_{\ol M} |\nabla u| \leq C (1 + \sup_{\partial M} |\nabla u|).
\end{equation}
\end{theorem}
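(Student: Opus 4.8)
Since \eqref{gradient-1} only requires bounding $|\nabla u|$ in the interior in terms of its boundary values, the plan is a maximum principle argument ruling out a large interior maximum of an auxiliary quantity. I would use
\[
w := \log |\nabla u|^2 + \varphi(u),
\]
where $\varphi \in C^\infty(\mathbb{R})$ is chosen at the end, depending only on $\sup_{\ol M}|u|$, so that $\varphi' \ge \ve_0 > 0$ and $\varphi'' - (\varphi')^2 \ge 0$ on the range of $u$; for instance $\varphi(u) = \ve\, e^{\,u}$ with $\ve$ small enough that $\ve\, e^{\,\sup_{\ol M} u} \le 1$. If $w$ attains its maximum over $\ol M$ on $\partial M$ then \eqref{gradient-1} is immediate; otherwise it is attained at an interior point $x_0$, and we may assume $v := |\nabla u|^2(x_0)$ is as large as we wish (else we are done). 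Fix a local orthonormal frame near $x_0$ diagonalizing $U = \nabla^2 u + \chi_1$ at $x_0$; then $(F^{ij})$ is diagonal there with $F^{ii} = h_i(\lambda(U)) > 0$ by \eqref{Fij} and \eqref{cj-10}.

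At $x_0$ the first order condition $\nabla_i w = 0$ reads $\nabla_i v = -v\,\varphi'(u)\,\nabla_i u$, equivalently $\nabla_{ik}u\,\nabla_k u = -\tfrac12 v\,\varphi'(u)\,\nabla_i u$; thus $\nabla u$ is an eigenvector of $\nabla^2 u$ with eigenvalue $-\tfrac12 v\,\varphi' \le -\tfrac12\ve_0 v$. Hence $U$ takes the value $-\tfrac12 v\varphi' + \chi_1\big(\tfrac{\nabla u}{|\nabla u|},\tfrac{\nabla u}{|\nabla u|}\big) \le -\tfrac12\ve_0 v + C$ on a unit vector, so its least eigenvalue, say $\lambda_n(U)$, satisfies $\lambda_n(U) \le -\tfrac14\ve_0 v$ once $v$ is large; in the chosen frame $\nabla_{nn}u = \lambda_n(U) - (\chi_1)_{nn} \le -\tfrac18\ve_0 v$, and since $\lambda_n(U) < 0$, Lemma \ref{cj-7} yields $F^{nn} = h_n(\lambda(U)) \ge \nu_0\big(1 + \sum_i F^{ii}\big)$. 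The second order condition is $\sum_{i,j}F^{ij}\nabla_{ij}w(x_0) \le 0$.

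Now I would expand this inequality, using $\nabla_{ij}\log v = \tfrac1v\nabla_{ij}v - \tfrac1{v^2}\nabla_i v\,\nabla_j v$, $\nabla_{ij}v = 2\nabla_{ik}u\,\nabla_{jk}u + 2\nabla_k u\,\nabla_{ijk}u$, and $\tfrac1{v^2}F^{ij}\nabla_i v\,\nabla_j v = (\varphi')^2 F^{ij}\nabla_i u\,\nabla_j u$ (by the first order relation). The only delicate term is the third order one, which is treated by differentiating \eqref{cj-8}: $F^{ij}\nabla_k U_{ij} = \nabla_k\tilde f = \tilde f_{x_k} + \tilde f_z\nabla_k u + \tilde f_{p_l}\nabla_{kl}u$. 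Commuting covariant derivatives via \eqref{hess-A70} produces only curvature terms (no derivatives of curvature), which after contraction with $\nabla u$ are $O\big(\sum_i F^{ii}\big)$, and the $\nabla_k(\chi_1)_{ij}$ term is of the same order; using $\nabla_{kl}u\,\nabla_k u = -\tfrac12 v\varphi'\,\nabla_l u$ once more, the $\nabla u$-contracted right hand side of the differentiated equation splits into precisely the combinations appearing in \eqref{cj-21},
\[
\tilde f_{p_l}\nabla_{kl}u\,\nabla_k u = -\tfrac12 v\,\varphi'\,\big(p\cdot\nabla_p f^{1/k}\big), \qquad
\tilde f_{x_k}\nabla_k u + |\nabla u|^2\,\tilde f_z = p\cdot\nabla_x f^{1/k} + |p|^2 f_z^{1/k},
\]
so, carrying the prefactor $\tfrac2v$, they contribute at least $-C\big(1 + v^{\gamma_1/2}\big)$ and $-C\big(1 + v^{\gamma_2/2 - 1}\big)$ respectively. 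Keeping the good term $\tfrac2v F^{ij}\nabla_{ik}u\,\nabla_{jk}u$, using $\varphi'\tilde f > 0$ and $\big(\varphi'' - (\varphi')^2\big)F^{ij}\nabla_i u\,\nabla_j u \ge 0$ by the choice of $\varphi$, and $\big|\varphi' F^{ij}(\chi_1)_{ij}\big| \le C\sum_i F^{ii}$, the inequality $\sum_{i,j}F^{ij}\nabla_{ij}w \le 0$ reduces to
\[
\frac{2}{v}\,F^{ij}\nabla_{ik}u\,\nabla_{jk}u \;\le\; C\big(1 + v^{\gamma_1/2} + v^{\gamma_2/2 - 1}\big) + C\Big(1 + \sum_i F^{ii}\Big).
\]
But the left side is at least
\[
\frac{2}{v}\,F^{nn}(\nabla_{nn}u)^2 \;\ge\; \frac{2}{v}\,\nu_0\Big(1 + \sum_i F^{ii}\Big)\Big(\tfrac18\ve_0 v\Big)^2 \;=\; c\,v\,\Big(1 + \sum_i F^{ii}\Big),
\]
and since $\gamma_1 < 2$ and $\gamma_2 < 4$ give $\gamma_1/2 < 1$ and $\gamma_2/2 - 1 < 1$, this is impossible once $v$ is large. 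Therefore $v = |\nabla u|^2(x_0) \le C$, hence $\max_{\ol M} w \le C$, and \eqref{gradient-1} follows.

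The main obstacle, and the only place where the structure of $f$ is really used, is the gradient dependence of $f$: the term $\tilde f_{p_l}\nabla_{kl}u$ arising from the differentiated equation is a priori of the same size as the good second order term, so without further information on $f$ it cannot be absorbed. The point of the logarithmic test function together with the eigenvector identity $\nabla_{ik}u\,\nabla_k u = -\tfrac12 v\varphi'\nabla_i u$ is exactly to convert this term into $p\cdot\nabla_p f^{1/k}$ (and the remaining first order terms into $p\cdot\nabla_x f^{1/k} + |p|^2 f_z^{1/k}$), so that the growth hypotheses \eqref{cj-21} apply; the requirements $\gamma_1 < 2$, $\gamma_2 < 4$ are precisely what make the final absorption against the coercive term $c\,v\,(1 + \sum_i F^{ii})$ — itself produced because the interior maximum forces a very negative eigenvalue of $U$, whence Lemma \ref{cj-7} applies — succeed.
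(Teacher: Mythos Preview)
Your argument is correct. The paper itself does not give a self-contained proof of Theorem~\ref{gradient}: it simply observes that $h$ satisfies the structural conditions \eqref{cj-10}--\eqref{cj-5} and then invokes Theorem~5.1 of \cite{Guan99} (or Theorem~4.3 of \cite{GJ16}). What you have written is essentially the direct proof that underlies those cited results, specialized to the present setting: the test function $\log|\nabla u|^2+\varphi(u)$, the eigenvector identity coming from the first-order condition, the use of Lemma~\ref{cj-7} to produce a coercive term $c\,v\,(1+\sum_i F^{ii})$ once the interior maximum forces a very negative eigenvalue of $U$, and the reduction of the $\nabla u$-contracted differentiated equation to exactly the combinations governed by \eqref{cj-21}. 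The identity $F^{ij}U_{ij}=\tilde f$ you use is \eqref{cj-5}. So your route and the paper's are the same in substance; you have simply written out what the paper cites.
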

Since $h$ satisfies \eqref{cj-10}-\eqref{cj-5}, Theorem \ref{gradient} follows by
Theorem 5.1 in \cite{Guan99} or Theorem 4.3 in \cite{GJ16}. %For readers' convenience we provide
Combining \eqref{cj-11} and \eqref{gradient-1},
the $C^1$ estimates are proved.

\section{Second order estimates}

In this section, we deal with the second order estimates for the Hessian type equation \eqref{cj-16}. The idea is
mainly from \cite{CJ20}. First since
$\lambda (\Delta \underline{u} g - \nabla^2 \underline{u} + \chi) \in \Gamma_k$, we can find a positive constant $\epsilon_0$
such that $\lambda (\Delta \underline{u} g - \nabla^2 \underline{u} + \chi - \epsilon_0 g) (x) \in \Gamma_k$ for all $x \in \overline{M}$.
Note that if $|u|_{C^1 (\ol M)}$ is under control there exist uniform
constants $\ti{f}_2 \geq \ti{f}_1 > 0$ such that
\begin{equation}
\label{fbound}
\ti{f}_1 \leq \ti{f} (x, u, \nabla u) \leq \ti{f}_2  \mbox{ on } \ol M.
\end{equation}
Hence by the
concavity of $\sigma_k^{1/k}$ in $\Gamma_k$, we have
\[
\begin{aligned}
& F^{ij} \nabla_{ij} (\underline{u} - u) - \epsilon_0 \sum G^{ii}\\
= \,& G^{ij} \big[(\Delta \underline{u} - \Delta u) \delta_{ij} - g^{il} (\nabla_{lj} \underline{u} - \nabla_{lj} u) - \epsilon_0 \delta_{ij}\big]\\
  \geq \,& G (\Delta \underline{u} g - \nabla^2 \underline{u} + \chi - \epsilon_0 g) - G (\Delta u g - \nabla^2 u + \chi)\\
  = \,& G (\Delta \underline{u} g - \nabla^2 \underline{u} + \chi - \epsilon_0 g) - \ti{f} (x, u, \nabla u) \geq - C
\end{aligned}
\]
on $\overline{M}$. It follows that
\begin{equation}
\label{cj-23}
F^{ij} \nabla_{ij} (\underline{u} - u) \geq \epsilon_0 \sum G^{ii} - C \ \ \mbox{ on } \overline{M}.
\end{equation}
We are ready to prove Theorem \ref{cj-thm-3}.
\begin{proof}[Proof of Theorem \ref{cj-thm-3}]
Consider the quantity
\[
W = \max_{x \in \overline{M}} \max_{\xi \in T_x M, |\xi| = 1} U_{\xi \xi} e^\phi,
\]
where $U_{\xi\xi} = U (\xi, \xi)$,
\[
\phi = \frac{\delta |\nabla u|^2}{2} + b (\underline{u} - u)
\]
and $\delta$ (sufficiently small), $b$ (sufficiently large) are positive constants to be determined. Suppose $W$
is achieved at an interior point $x_0 \in M$ and $\xi_0 \in T_{x_0} M$. Choose a smooth orthonormal local frame
$e_1, \ldots, e_n$ about $x_0$ such that at $x_0$, $e_1 = \xi_0$, $\nabla_{e_i} e_j = 0$ and $\{U_{ij}\}$
is diagonal. So $\{\eta_{ij} (x_0)\}$, $\{G^{ij} (x_0)\}$ and $\{F^{ij} (x_0)\}$ are all diagonal. We may assume
\[
U_{11} (x_0) \geq \cdots \geq U_{nn} (x_0)
\]
so that
\[
\eta_{11} (x_0) \leq \cdots \leq \eta_{nn} (x_0)
\]
since $\eta_{jj} (x_0) = \sum_{i \neq j} U_{ii} (x_0)$ for $1 \leq j \leq n$. We have, at $x_0$ where the
function $\log U _{11} + \phi$ attains its maximum,
\begin{equation}
\label{cj-24}
\frac{\nabla_{i} U_{11}}{U_{11}} + \delta \nabla_i u \nabla_{ii} u + b \nabla_i (\underline{u} - u) = 0,
\end{equation}
for each $i = 1, \ldots, n$ and
\begin{equation}
\label{cj-25}
F^{ii} \Big\{\frac{\nabla_{ii} U_{11}}{U_{11}} - \Big(\frac{\nabla_{i} U_{11}}{U_{11}}\Big)^2 + \delta (\nabla_{ii} u)^2+ b \nabla_{ii} (\underline{u} - u) + \delta \nabla_l u \nabla_{iil} u\Big\} \leq 0.
\end{equation}
%We recall that, for $\psi \in C^4 (M)$,
%\begin{equation}
%\label{hess-A80}
%\begin{aligned}
%\nabla_{ijkl} \psi - \nabla_{klij} \psi
%= \,& R^m_{ljk} \nabla_{im} \psi + \nabla_i R^m_{ljk} \nabla_m \psi
%      + R^m_{lik} \nabla_{jm} \psi \\
%  & + R^m_{jik} \nabla_{lm} \psi
%      + R^m_{jil} \nabla_{km} \psi + \nabla_k R^m_{jil} \nabla_m \psi.
%\end{aligned}
%\end{equation}
Differentiating the equation \eqref{cj-16} twice we get, at $x_0$,
\begin{equation}
\label{cj-26}
F^{ii} \nabla_{l} U_{ii} = \nabla_{l}' \ti{f} + \ti{f}_u \nabla_l u + \ti{f}_{p_l} \nabla_{ll} u,
\end{equation}
where $\nabla_{l}' \ti{f}$ denotes the partial covariant derivative of $\tilde{f}$ when viewed as
depending on $x \in M$ only
for $l = 1, \ldots, n$. Combining with \eqref{hess-A70} and \eqref{cj-24}, we obtain
\begin{equation}
\label{cj-27}
\begin{aligned}
F^{ii} \nabla_{11} U_{ii} + & G^{ij,st} \nabla_1 \eta_{ij} \nabla_1 \eta_{st} = \nabla_{11} \ti{f}\\
   \geq \,& - C U_{11}^2 + \ti{f}_{p_l} \nabla_{11l} u \geq - C U_{11}^2 - C b U_{11}
\end{aligned}
\end{equation}
provided $U_{11}$ is sufficiently large. Now we consider two cases.

\noindent
{\bf Case 1.} \ $|U_{ii}| \leq \epsilon U_{11}$ for all $i>1$, where the positive constant $\epsilon$ will be determined later.

Thus, we find
\[
|\eta_{11}| \leq (n - 1) \epsilon U_{11}
\]
and
\[
(1 - (n - 2)\epsilon) U_{11} \leq \eta_{22} \leq \cdots \leq \eta_{nn} \leq (1 + (n - 2)\epsilon) U_{11}.
\]
It follows that
\[
\begin{aligned}
\sigma_{k - 1} (\eta) \geq \,& \big[(1 - (n - 2)\epsilon)^{k - 1} - (n - 1) \epsilon (1 + (n - 2)\epsilon)^{k - 2}\big] U_{11}^{k - 1}\\
   \geq \,& \frac{1}{2} U_{11}^{k - 1}
\end{aligned}
\]
if $\epsilon$ is sufficiently small. Therefore, there exists a positive constant $c_0$ depending only on $n$, $k$ and $\tilde{f}_1$ such that
\begin{equation}
\label{cj-30}
\sum G^{ii} = \frac{n - k + 1}{k} f^{1/k - 1} \sigma_{k - 1} (\eta) \geq c_0 U_{11}^{k - 1}.
\end{equation}
As in \cite{Guan14}, using an
inequality due to Andrews~\cite{Andrews94}
and Gerhardt~\cite{Gerhardt96} and by \eqref{hess-A70} and \eqref{cj-24}, we have
\begin{equation}
\label{cj-31}
\begin{aligned}
 - G^{ij, st} & \nabla_1 \eta_{ij} \nabla_1 \eta_{st}
   \geq \sum_{i \neq j} \frac{G^{ii} - G^{jj}}{\eta_{jj} - \eta_{ii}} (\nabla_1 \eta_{ij})^2\\
   \geq \,& 2 \sum_{i \geq 2} \frac{F^{ii} - F^{11}}{U_{11} - U_{ii}} (\nabla_{i} U_{11})^2\\
   \geq \,& \frac{2 (1 - \theta)}{1 + \theta} \sum_{i \geq 2} \frac{F^{ii} (\nabla_{i} U_{11})^2}{U_{11}}
           - \frac{C}{\theta U_{11}} \sum F^{ii}
             - U_{11} F^{11} |\nabla \phi|^2\\
   \geq \,& \sum_{1 \leq i \leq n} \frac{F^{ii} (\nabla_{i} U_{11})^2}{U_{11}} - \frac{C}{\theta U_{11}} \sum F^{ii}\\
             & - C U_{11} (b^2 + \delta^2 U_{11}^2) F^{11}
\end{aligned}
\end{equation}
by fixing $\theta$ small enough so that
\[
\frac{2 (1 - \theta)}{1 + \theta} \geq 1.
\]
Now we can derive from
%By \eqref{hess-A80}, \eqref{cj-27} and the concavity of $G$, we find
%\begin{equation}
%\label{cj-28}
%F^{ii} \nabla_{ii11} u \geq - C \nabla_{11} u \sum F^{ii} - C (\nabla_{11} u)^2 - C b \nabla_{11} u.
%\end{equation}
\eqref{cj-23} and \eqref{cj-25}-\eqref{cj-31} that, at $x_0$,
\begin{equation}
\label{cj-29}
\begin{aligned}
0 \geq \,& - C \sum F^{ii} - C U_{11} - \frac{C}{\theta U_{11}^2} \sum F^{ii} + \delta F^{ii} U_{ii}^2\\
 & - C (b^2 + \delta^2 U_{11}^2) F^{11} - C b
   + b \epsilon_0 \sum G^{ii} - C \delta U_{11}\\
  \geq \,& (\delta - C \delta^2) F^{ii} U_{ii}^2 - C b^2 F^{11} + \frac{b \epsilon_0 c_0}{2} U_{11}^{k - 1}\\
  & - C U_{11} + \frac{b \epsilon_0}{2} \sum G^{ii} - C (n - 1) \sum G^{ii}
\end{aligned}
\end{equation}
provided $U_{11} \geq b$. Suppose $b$ is large enough such that in \eqref{cj-29},
\[
\frac{b \epsilon_0 c_0}{2} U_{11}^{k - 1}
   - C U_{11} + \frac{b \epsilon_0}{2} \sum G^{ii} - C (n - 1) \sum G^{ii} \geq 0.
\]
Therefore, we have
\begin{equation}
\label{cj-34}
0 \geq (\delta - C \delta^2) F^{ii} U_{ii}^2 - C b^2 F^{11}.
\end{equation}

\bigskip

\noindent
{\bf Case 2.} \ $U_{22} > \epsilon U_{11}$ or $U_{nn} < - \epsilon U_{11}$.
\bigskip

By the definition of $F^{ij}$, we have, at $x_0$,
\[
F^{jj} = \sum_{i \neq j} G^{ii} \geq G^{11} \geq \frac{1}{n} \sum G^{ii}, \mbox{ for all } j = 2, \ldots, n.
\]
It follows that, in this case,
\begin{equation}
\label{cj-32}
F^{ii} U_{ii}^2 \geq \frac{\epsilon U_{11}^2}{n} \sum G^{ii}.
\end{equation}
Next, by MacLaurin's inequality, $\sum G^{ii} \geq c_1 > 0$ for some positive constant $c_1$ depending only on $n$ and $k$.
We have, by \eqref{cj-23}-\eqref{cj-27}, \eqref{cj-32} and the concavity of $G$,
\begin{equation}
\label{cj-33}
\begin{aligned}
0 \geq \,& - C U_{11} - C b - F^{ii} (\nabla_i \phi)^2 + \delta F^{ii} U_{ii}^2 - C \sum F^{ii}\\
 \geq \,& \frac{(\delta - C \delta^2)\epsilon}{n} U_{11}^2 \Big(\frac{1}{2} \sum G^{ii} + \frac{c_1}{2}\Big)\\
    & - C U_{11} - C b^2 \sum G^{ii}
\end{aligned}
\end{equation}
provided $U_{11} \geq b$.

Now in view of \eqref{cj-34} and \eqref{cj-33} in both cases, we can choose $0 < \delta \ll 1 \ll b$ to obtain
\[
U_{11} (x_0) \leq \frac{C b}{\epsilon (\delta - C \delta^2)}
\]
and Theorem \ref{cj-thm-3} is proved.
\end{proof}

\section{Boundary estimates for second order derivatives}

The key step to prove Theorem \ref{cj-thm-4} is constructions of suitable barrier functions.
For a point $x_0$ on $\partial M$, we choose
smooth orthonormal local frames $e_1, \ldots, e_n$ around $x_0$ such that
when restricted to $\partial M$, $e_n$ is the interior normal to $\partial M$.

The estimate for $|\nabla_{\alpha \beta} u (x_0)|$ follows naturally from the formula
\begin{equation}
\label{hess-a200}
\nabla_{\alpha \beta} (u - \ul{u})
 = -  \nabla_n (u - \ul{u}) \varPi (e_{\alpha}, e_{\beta}), \;\;
\forall \; 1 \leq \alpha, \beta < n \;\;
\mbox{on  $\partial M$}
\end{equation}
where %$\nu$ is the unit normal to $\partial M$ and
$\varPi$ denotes the second fundamental form of $\partial M$.

For $x \in \overline{M}$ let $\rho (x)$ and $d (x)$ denote the distances
from $x$ to $x_0$ and $\partial M$, respectively,
\[ \rho (x) := \mbox{dist}_{M} (x, x_0), \;\;
      d (x) := \mbox{dist}_{M} (x, \partial M) \]
and $M_{\delta} = \{x \in M : \rho (x) < \delta \}$.
As in \cite{Guan14},
we shall construct barriers of the form
\begin{equation}
\label{cj-35}
 \varPsi
   = A_1 v + A_2 \rho^2 - A_3 \sum_{\beta < n} |\nabla_{\beta} (u - \varphi)|^2,
\end{equation}
where $v = (u - \underline{u}) + t d - \frac{N d^2}{2}$ and $t$, $N$, $A_1$, $A_2$, $A_3$ are positive constants to be chosen.
Define the linear operator $\mathcal{L}$ locally by
\[
\mathcal{L} w := F^{ij} \nabla_{ij} w
    - f^{1/k}_{p_l} (x, u, \nabla u) \nabla_l w, \;\; w \in C^2 (M).
\]
%where $F^{ij}$ is defined locally as in \eqref{Fij}.

It follows that Lemma 6.2 of \cite{Guan99} also holds for equation \eqref{cj-8} and \eqref{cj-19}.
Thus, there exists positive constants $t, \delta \ll 1 \ll N$ and $\theta_0$ such that
\begin{equation}
\label{cj-36}
\mathcal{L} v \leq - \theta_0 \Big(1 + \sum F^{ii}\Big) \mbox{ and } v \geq 0 \mbox{ in } M_\delta.
\end{equation}
\begin{lemma}
\label{cj-lem3}
Let $h \in C
(\overline{M}_{\delta})$ satisfy $h \leq C \rho^2$ on $\overline{M}_{\delta}
\cap \partial M$ and $h \leq C$ on $\overline{M}_{\delta}$. Then for
any positive constant $K$ there exist uniform positive constants
$A_1 \gg A_2 \gg A_3 \gg 1$ such that $\varPsi \geq h$ on $\partial
M_{\delta}$ and
\begin{equation}
\label{eq3-5}
\mathcal{L} \varPsi \leq - K
\Big(1 + \sum f_i |\lambda_i|
  + \sum F^{ii}\Big)  \;\; \mbox{in $M_{\delta}$},
\end{equation}
where $f_1, \ldots, f_n$ and $\lambda_1, \ldots, \lambda_n$ are eigenvalues of
$\{F^{ij}\}$ and $\{U_{ij}\}$ respectively.
\end{lemma}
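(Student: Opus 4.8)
The plan is to build $\varPsi$ so that its leading term $A_1 v$ dominates everything else in $M_\delta$, exploiting the strong negativity of $\mathcal{L} v$ from \eqref{cj-36}. First I would estimate $\mathcal{L}$ applied to each piece of $\varPsi$ separately. For the term $A_2 \rho^2$: since $\rho$ is the distance to $x_0$, in the normal frame one has $\nabla^2 \rho^2 = 2g + O(\rho)$ near $x_0$, so $F^{ij}\nabla_{ij}(\rho^2) \leq C\sum F^{ii}$, and the first-order term contributes at most $C\sum F^{ii}$ as well (using $|\nabla \rho^2| \leq C\rho \leq C$ and that $|f_p^{1/k}|$ is controlled via the $C^1$ bound on $u$); hence $\mathcal{L}(A_2\rho^2) \leq C A_2 (1+\sum F^{ii})$. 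For the term $-A_3\sum_{\beta<n}|\nabla_\beta(u-\varphi)|^2$, this is the delicate piece: differentiating, $\mathcal{L}$ of it produces a bad term of the form $-2A_3\sum_{\beta<n} F^{ij}\nabla_i\nabla_\beta(u-\varphi)\nabla_j\nabla_\beta(u-\varphi)$ (good sign, can be discarded) together with a term coming from commuting derivatives past the equation. Specifically, differentiating \eqref{cj-8} once in the direction $e_\beta$ gives $F^{ij}\nabla_{ij}(\nabla_\beta u) = \nabla_\beta'\tilde f + \tilde f_u\nabla_\beta u + \tilde f_{p_l}\nabla_{\beta l}u + (\text{curvature})\cdot F^{ij}U_{ij} + F^{ij}\cdot(\text{curvature terms in }\chi_1)$, which is bounded by $C(1+\sum F^{ii}|\lambda_i|)$ — this is precisely where the term $\sum f_i|\lambda_i|$ in \eqref{eq3-5} enters. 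Multiplying by $2A_3\nabla_\beta(u-\varphi)$ and summing, and absorbing $|\nabla_\beta(u-\varphi)|\leq C$ from the $C^1$ estimate, yields $\mathcal{L}\big(-A_3\sum_{\beta<n}|\nabla_\beta(u-\varphi)|^2\big) \leq C A_3\big(1+\sum F^{ii}|\lambda_i|+\sum F^{ii}\big)$.

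Combining these with \eqref{cj-36}, I obtain on $M_\delta$
\[
\mathcal{L}\varPsi \leq -A_1\theta_0\Big(1+\sum F^{ii}\Big) + CA_2\Big(1+\sum F^{ii}\Big) + CA_3\Big(1+\sum F^{ii}|\lambda_i|+\sum F^{ii}\Big).
\]
The point is that the coefficient $A_1$ multiplies the largest admissible negative quantity. One subtlety: the right side of \eqref{eq3-5} also contains $\sum f_i|\lambda_i|$, which the $A_1 v$ term does not control directly; this is handled because the only source of $\sum F^{ii}|\lambda_i|$ on the left is the $A_3$-term, so I first fix $A_3$ (depending on $K$ and the structural constants), getting a fixed multiple $CA_3(1+\sum F^{ii}|\lambda_i|)$ that must be dominated. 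But here I use the elementary fact, valid in $\Gamma$, that $\sum F^{ii}|\lambda_i| \leq C(1+\sum F^{ii})$ fails in general — instead I absorb it by choosing $A_1 \gg A_3$ only after noting that the bad contribution $CA_3\sum F^{ii}|\lambda_i|$ is of the same type as a term already appearing with the required sign: more precisely, once $K$ is given, I set $A_3$ so that $CA_3 \geq K$, then choose $A_2 \gg A_3$ so that $CA_2 \geq CA_3 + K$ is beaten, then finally $A_1 \gg A_2$ so that $A_1\theta_0$ exceeds $CA_2 + CA_3 + K$ plus whatever multiple of $(1+\sum F^{ii})$ is needed. Re-examining: the cleanest route is to note $\mathcal{L}\varPsi + K(1+\sum f_i|\lambda_i|+\sum F^{ii}) \leq -A_1\theta_0(1+\sum F^{ii}) + (CA_2+CA_3+K)(1+\sum F^{ii}) + (CA_3+K)\sum F^{ii}|\lambda_i|$, and since $\sum F^{ii}|\lambda_i|$ is nonnegative, I cannot simply drop it; rather, one observes that in the application this lemma is combined with a term that already carries $+\sum F^{ii}|\lambda_i|$ with a favorable sign, so the correct statement absorbs it into the barrier hierarchy by the standard trick of first choosing $A_3$, then $A_2$, then $A_1$ each an order of magnitude larger, so every term not multiplied by $A_1$ is dominated by $\frac{1}{2}A_1\theta_0(1+\sum F^{ii})$ and the remaining $\frac{1}{2}A_1\theta_0(1+\sum F^{ii})$ exceeds $K(1+\sum F^{ii})$; the term $K\sum f_i|\lambda_i|$ is then controlled because $CA_3\sum F^{ii}|\lambda_i|$ with the wrong sign is exactly cancelled in the final inequality — this requires that we only claim \eqref{eq3-5} after having absorbed it, which is legitimate since $CA_3 + K$ is a fixed constant and the inequality $CA_3\sum F^{ii}|\lambda_i| \leq$ anything is false, so in fact the resolution is that the term $F^{ij}\nabla_{ij}(\nabla_\beta u)$ does NOT produce $\sum F^{ii}|\lambda_i|$ with a bad sign after multiplication by $\nabla_\beta(u-\varphi)$ when one also uses the first-order critical-point-type relation; one keeps the good term $-2A_3 F^{ij}\nabla_{i\beta}(u-\varphi)\nabla_{j\beta}(u-\varphi)$ to absorb it via Cauchy-Schwarz.

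For the boundary inequality $\varPsi \geq h$ on $\partial M_\delta$: split $\partial M_\delta = (\overline{M_\delta}\cap\partial M) \cup (\partial M_\delta \cap M)$. On $\overline{M_\delta}\cap\partial M$, we have $v = 0$ there is false in general — rather $u = \underline u = \varphi$ so $u-\underline u = 0$ and $d = 0$, hence $v = 0$; also $\sum_{\beta<n}|\nabla_\beta(u-\varphi)|^2 = 0$ since tangential derivatives of $u-\varphi$ vanish on $\partial M$; therefore $\varPsi = A_2\rho^2 \geq C\rho^2 \geq h$ once $A_2$ is large. On $\partial M_\delta\cap M$ we have $\rho = \delta$, so $A_2\rho^2 = A_2\delta^2$ is a fixed positive constant, while $v \geq 0$ and $|\nabla_\beta(u-\varphi)|^2 \leq C$ by the $C^1$ estimate; thus $\varPsi \geq A_2\delta^2 - CA_3 \geq C \geq h$ once $A_2\delta^2 \gg A_3$. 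These are consistent with the hierarchy $A_1 \gg A_2 \gg A_3 \gg 1$.

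The main obstacle is the $\sum f_i|\lambda_i|$ bookkeeping: one must differentiate the equation in tangential directions, carefully track the curvature terms arising from \eqref{hess-A70}, \eqref{hess-A80} and from the tensor $\chi_1$, and verify that every resulting term is bounded by a universal constant times $(1 + \sum f_i|\lambda_i| + \sum F^{ii})$, with the genuinely bad quadratic-in-$\nabla^2 u$ contributions absorbed into the negative definite term $-2A_3\sum_{\beta<n}F^{ij}\nabla_{i}\nabla_\beta(u-\varphi)\nabla_j\nabla_\beta(u-\varphi)$ by Cauchy-Schwarz. Once that bound is in hand, the choice of constants is the standard three-step cascade, exactly as in Lemma 6.2 of \cite{Guan99} and the corresponding lemma in \cite{Guan14}.
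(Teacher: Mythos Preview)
The paper does not give its own proof of this lemma; it simply cites Lemma~5.2 of \cite{GJ15}.  Your outline follows that standard barrier argument and is correct at the level of the boundary comparison and the hierarchy $A_1\gg A_2\gg A_3$.  The long back-and-forth in the middle, however, hides a genuine gap.  You begin by discarding the quadratic term $-2A_3\sum_{\beta<n}F^{ij}\nabla_{i\beta}(u-\varphi)\nabla_{j\beta}(u-\varphi)$ as merely ``good sign'', and after your Cauchy--Schwarz absorption your final chain yields only $\mathcal{L}\varPsi\leq -K\big(1+\sum F^{ii}\big)$.  This is strictly weaker than \eqref{eq3-5}: on the level set $\{h=\tilde f\}$ the quantity $\sum f_i|\lambda_i|$ is \emph{not} bounded by $C\big(1+\sum F^{ii}\big)$ in general (e.g.\ for $n=k=3$ take $\lambda=(-t,\,t+\varepsilon,\,t+\varepsilon)$ with $\varepsilon\sim t^{-1/2}$, so that $h(\lambda)$ stays bounded while $f_1|\lambda_1|/\sum f_i\to\infty$).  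So Cauchy--Schwarz used only to kill the frame/curvature errors cannot by itself produce the required $-K\sum f_i|\lambda_i|$.

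The missing ingredient, which is the heart of the \cite{GJ15} proof, is a \emph{lower} bound on the quadratic term itself.  At a point diagonalise $\{U_{ij}\}$ and $\{F^{ij}\}$ by an orthonormal frame $\{\tilde e_l\}$ and write $e_n=\sum_l a_l\tilde e_l$; then
\[
\sum_{\beta<n}F^{ij}U_{i\beta}U_{j\beta}=\sum_l f_l\lambda_l^2\,(1-a_l^2)\ \geq\ \tfrac12\sum_{l\neq r}f_l\lambda_l^2
\]
for the (at most one) index $r$ with $a_r^2>\tfrac12$.  Now invoke the equation: by \eqref{cj-5} one has $\sum_l f_l\lambda_l=\tilde f$, hence $f_r|\lambda_r|\leq C+\sum_{l\neq r}f_l|\lambda_l|$ and therefore
\[
\sum_l f_l|\lambda_l|\ \leq\ C+2\sum_{l\neq r}f_l|\lambda_l|\ \leq\ C+\sum_{l\neq r}f_l\lambda_l^2+\sum_l f_l.
\]
Feeding this back, the $-A_3$ block contributes $\leq -cA_3\sum_l f_l|\lambda_l|+CA_3\big(1+\sum F^{ii}\big)$ after absorbing all lower-order errors (including the $\Gamma$-- and curvature--terms you identified) by Cauchy--Schwarz against the remaining half of the quadratic.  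With this in hand the cascade is clean: choose $A_3$ so that $cA_3\geq K$, then $A_2$ for the boundary inequality, then $A_1$ so that $A_1\theta_0$ dominates $C(A_2+A_3)+K$.
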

We refer the reader to Lemma 5.2 in \cite{GJ15} for the proof of Lemma \ref{cj-lem3}
and a bound of $|\nabla_{\alpha n} u (x_0)|$ ($1 \leq \alpha \leq n-1$) can be obtained
as in \cite{GJ15}. A bound of $|\nabla_{nn} u (x_0)|$ can be obtained as in \cite{Guan99}
since the equation \eqref{cj-8} satisfies \eqref{cj-2} so that \eqref{cj-8} is a special case
of equations considered in \cite{Guan99}. In this paper, we provide a different proof
which depends on the special structure of \eqref{cj-8}. It suffices to prove
\begin{equation}
\label{purenormal}
\nabla_{nn} u (x_0) \leq C
\end{equation}
since the trace of $U$ with respect to the metric $g$, $\mathrm{tr}_g (U) > 0$.

We consider two
cases $k < n$ and $k = n$. First we may assume $\{U_{\alpha \beta}(x_{0})\}_{1\leq\alpha,\beta\leq n-1}$ is diagonal by
rotating $e_1, \ldots, e_{n-1}$. Let $\mu_\alpha = U_{nn} (x_0) + \sum_{\beta \neq \alpha} U_{\beta \beta} (x_0)$ for $1 \leq \alpha \leq n - 1$,
$\mu' = (\mu_1, \ldots, \mu_{n-1})$
and $\mu_n = \sum_{\alpha < n} U_{\alpha \alpha} (x_0)$. When $k < n$, we find, at $x_0$, the equation \eqref{cj-8} can be written by
\begin{equation}
\label{eqn}
\mu_n \sigma_{k - 1} (\mu') + \sigma_k (\mu') - \sum_{\alpha < n} U_{\alpha n}^2 \sigma_{k-2; \alpha} (\mu') = f.
\end{equation}
We see if $U_{nn} (x_0)$ is sufficiently large, so are $\mu_\alpha$ for $1 \leq \alpha \leq n-1$. It follows that $U_{nn} (x_0)$ being sufficiently large
will contradict the equation \eqref{eqn}. Therefore, \eqref{purenormal} is proved.

For the case $k=n$, we have
\begin{lemma}
\label{cj-lem4}
Suppose $k = n$.
Let $\tilde{g}$, $\tilde{\nabla}$ and $\nu$ be the induced metric, connection on $\partial M$ and
the interior unit normal vector field to $\partial M$ respectively.
Let $\lambda' (\tilde{\nabla} \varphi - \nabla_\nu u \varPi + \tilde{\chi}_1)$ denote the eigenvalues of
$\tilde{\nabla} \varphi - \nabla_\nu u \varPi + \tilde{\chi}_1$ with respect to the metric $\tilde{g}$ on $\partial M$.
Then there exists a uniform constant $c_0 > 0$ such that
\begin{equation}
\label{cj-17}
\mathrm{tr}_{\tilde{g}} (\tilde{\nabla} \varphi - \nabla_\nu u \varPi + \tilde{\chi}_1) \geq c_0 \mbox{ on } \partial M,
\end{equation}
where $\mathrm{tr}_{\tilde{g}} (X')$ denotes the trace of $X'$ with respect to $\tilde{g}$ for a $(0, 2)$-tensor field
$X'$ on $\partial M$.
\end{lemma}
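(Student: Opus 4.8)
\textbf{Proof proposal for Lemma \ref{cj-lem4}.}

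The plan is to exploit the fact that, when $k=n$, equation \eqref{cj-8} becomes the $(n-1)$ Monge-Amp\`ere equation $\det(U) = f > 0$ with $\lambda(U) \in \Gamma$, and to derive \eqref{cj-17} as a positivity statement about the restriction of $U$ to the tangent bundle of $\partial M$. First I would set up the local frame $e_1, \ldots, e_n$ as in the boundary construction above, with $e_n = \nu$ the interior unit normal along $\partial M$, and rotate $e_1, \ldots, e_{n-1}$ so that $\{U_{\alpha\beta}(x_0)\}_{1 \le \alpha,\beta \le n-1}$ is diagonal at the given boundary point $x_0$. The key observation is that the tangential components $U_{\alpha\beta}$ for $\alpha, \beta < n$ are \emph{not} controlled by $\nabla_{nn} u$ at all: using \eqref{hess-a200} together with $u = \varphi$ on $\partial M$, one rewrites $\nabla_{\alpha\beta} u = \tilde\nabla_{\alpha\beta}\varphi - \nabla_n u\, \varPi(e_\alpha, e_\beta)$ on $\partial M$, so that $U_{\alpha\beta} = \tilde\nabla_{\alpha\beta}\varphi - \nabla_\nu u\,\varPi_{\alpha\beta} + (\chi_1)_{\alpha\beta}$; taking the trace over $\alpha, \beta < n$ with respect to $\tilde g$ gives precisely $\mathrm{tr}_{\tilde g}(\tilde\nabla\varphi - \nabla_\nu u\,\varPi + \tilde\chi_1) = \sum_{\alpha < n} U_{\alpha\alpha}(x_0) = \mu_n$ in the notation already introduced (after identifying $\tilde\chi_1$ with the tangential part of $\chi_1$). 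So the lemma is equivalent to the lower bound $\mu_n \ge c_0 > 0$.

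Next I would prove $\mu_n > 0$ and then quantify it. Since $\lambda(U) \in \Gamma$ means $(\mu_1, \ldots, \mu_n) \in \Gamma_n$ in the sense that every $\mu_i = \mathrm{tr}_g(U) - U_{ii}$ lies in the positive cone — more precisely, because $k = n$, the cone $\Gamma$ consists of $\lambda$ with all partial sums $\mu_i(\lambda) > 0$ — we get each $\mu_\alpha > 0$. Writing the equation \eqref{cj-8} at $x_0$ in the block form $\det(U) = f$ and expanding along the last row/column as in \eqref{eqn} (with $k = n$, so $\sigma_{n-1}(\mu') = \prod_{\alpha<n}\mu_\alpha$ etc.), one has
\[
\mu_n \prod_{\alpha < n} \mu_\alpha + \Big(\text{lower order in }\mu_n\Big) - \sum_{\alpha < n} U_{\alpha n}^2 \prod_{\beta \neq \alpha, \beta < n}\mu_\beta = f,
\]
so $\mu_n \prod_{\alpha<n}\mu_\alpha \le f + \sum_{\alpha<n} U_{\alpha n}^2 \prod_{\beta\neq\alpha}\mu_\beta$. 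This by itself only bounds $\mu_n$ from above. For the lower bound I would instead argue as follows: if $\mu_n$ were very small (or $\le 0$, contradicting admissibility), then since $\mathrm{tr}_g(U) = \mu_n + U_{nn}$ and each $\mu_\alpha = \mathrm{tr}_g(U) - U_{\alpha\alpha}$, the numbers $\mu_1, \ldots, \mu_{n-1}$ would all be close to $U_{nn} = \mathrm{tr}_g(U) - \mu_n$; combined with the already-established interior and tangential $C^2$ bounds $|U_{\alpha\beta}(x_0)| \le C$ for $\alpha, \beta < n$ and the $C^1$ estimate $|\nabla_\nu u| \le C$ (hence $|U_{\alpha\beta}|$ bounded for tangential indices via \eqref{hess-a200}), we would have $\mathrm{tr}_g(U) = \mu_n + \sum_{\alpha<n}U_{\alpha\alpha} \le \mu_n + C$, so $U_{nn}$ is bounded above, and then $\mu_\alpha \le C$ for all $\alpha$, placing $\lambda(U)(x_0)$ in a fixed compact subset of $\overline{\Gamma}$. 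But $\det(U) = \prod_\alpha \mu_\alpha \le \mu_n \cdot C^{n-1} \to 0$ as $\mu_n \to 0$, contradicting $\det(U) = f \ge \tilde f_1^{\,k} = \inf f > 0$ from \eqref{fbound}. Tracking the constants, $\mu_n \ge \inf f / C^{n-1} =: c_0$, which is \eqref{cj-17}.

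The main obstacle I anticipate is making the dependence of the auxiliary bounds precise enough that $c_0$ is genuinely \emph{uniform}: the argument silently uses an upper bound on the tangential second derivatives $|\nabla_{\alpha\beta} u(x_0)|$ for $\alpha, \beta < n$, which is legitimate because \eqref{hess-a200} reduces these to $\varphi$, $\nabla_\nu u$ and the geometry of $\partial M$ — all already controlled by $|\varphi|_{C^2(\partial M)}$, the $C^1$ estimate \eqref{cj-11}, and $|\varPi|$ — so no circularity with the quantity \eqref{purenormal} we are ultimately trying to bound. One must also check that admissibility $\lambda(U) \in \Gamma$ really does force $\mu_n > 0$: since the definition of $\Gamma$ requires $(\mu_1, \ldots, \mu_n) \in \Gamma_k = \Gamma_n$ and $\Gamma_n$ is the positive orthant, all $\mu_i > 0$, in particular $\mu_n = \mathrm{tr}_{\tilde g}(\tilde\nabla\varphi - \nabla_\nu u\,\varPi + \tilde\chi_1) > 0$; the content of the lemma is only the quantitative gap from zero, and the compactness-plus-$\det$-bound argument above supplies it.
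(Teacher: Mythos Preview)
Your reduction of \eqref{cj-17} to a uniform lower bound on $\mu_n = \sum_{\alpha<n} U_{\alpha\alpha}$, and the final step ``from \eqref{eqn1} one has $\mu_n\prod_{\alpha<n}\mu_\alpha \ge f$, hence $\mu_n \ge f/C^{n-1}$ once each $\mu_\alpha \le C$'', are correct. The gap is in how you obtain $\mu_\alpha \le C$ for $\alpha<n$. You write ``$\mathrm{tr}_g(U) = \mu_n + \sum_{\alpha<n}U_{\alpha\alpha} \le \mu_n + C$, so $U_{nn}$ is bounded above''; but the correct identity is $\mathrm{tr}_g(U) = U_{nn} + \mu_n$, and since $\mu_\alpha = U_{nn} + \sum_{\beta\neq\alpha,\,\beta<n}U_{\beta\beta}$, bounding $\mu_\alpha$ from above \emph{requires} an upper bound on $U_{nn} = \nabla_{nn}u + (\chi_1)_{nn}$. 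That is precisely the double-normal estimate \eqref{purenormal} that this lemma is being invoked to establish, so your argument is circular. Your final paragraph checks only that the \emph{tangential} second derivatives are non-circularly bounded, which is true but not what you actually use.

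The paper breaks the circularity by inserting a barrier step. One works at the minimum point $y_0\in\partial M$ of $\psi:=\mu_n$; the minimum condition, together with \eqref{hess-a200} applied to both $u$ and the admissible function $\ul u$, yields the boundary inequality $\nabla_n(u-\ul u)\le \varPhi$ on $\partial M$ near $y_0$ for an explicit smooth $\varPhi$ (here the admissibility of $\ul u$ is used to show the mean curvature factor $\sum_{\alpha<n}B_{\alpha\alpha}$ is uniformly positive near $y_0$ under the assumption $\psi(y_0)<\tilde c/2$). Then Lemma~\ref{cj-lem3} provides a supersolution $\varPsi$ so that the maximum principle gives $\nabla_{nn}u(y_0)\le C$. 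With this bound \emph{at $y_0$} in hand, your last step applies verbatim to give $\psi(y_0)\ge c_0$, and since $y_0$ was the minimum this proves \eqref{cj-17}. The missing ingredient in your proposal is exactly this barrier argument furnishing the non-circular bound on $\nabla_{nn}u$ at the minimum point.
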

\begin{proof}
First, for any $\xi, \eta \in T_x \partial M$ at $x \in \partial M$,
\[
\nabla_{\xi \eta} u = \tilde{\nabla} \varphi - \nabla_\nu u \varPi (\xi, \eta).
\]
Suppose the minimum of $\psi (x) := \mathrm{tr}_{\tilde{g}} (\tilde{\nabla} \varphi - \nabla_\nu u \varPi + \tilde{\chi}_1)(x)$
is achieved at a point $y_{0}\in\partial\Omega$.
It suffices to show $\psi (y_{0})\geq c_{0}>0$. Choose a local orthonormal frame $e_{1},\cdots,e_{n}$ about $y_{0}$
as before such that $\{U_{\alpha \beta}(y_{0})\}_{1\leq\alpha,\beta\leq n-1}$ is diagonal. It follows that
\[
\psi (x) = \sum_{\alpha < n} U_{\alpha \alpha} (x) \mbox{ for } x \in \partial M \mbox{ near } y_0.
\]
Set
\begin{equation}B_{\alpha\beta}=\langle \nabla_{\alpha}e_{\beta},e_{n}\rangle,\ \ 1\leq\alpha,\beta\leq n-1.
\label{B}\end{equation}
Thus, $B_{\alpha\beta}=\varPi(e_{\alpha},e_{\beta})$ on $\partial M$ near $y_0$.
%By \cite{CNS3}, there is $b'=(b_{1},\cdots,b_{n-1})\in\mathbb{R}^{n-1}$
%such that
%$$b_{1}\geq\cdots\geq b_{n-1}\geq 0,\ \ \sum_{\alpha<n}b_{\alpha}=1$$
%and
%$$\sum_{\alpha<n}b_{\alpha} U_{\alpha \alpha} (y_0)=\tilde{d}(y_{0})$$
%such that
%$$\Gamma'\mbox{ lies in }\{\lambda'\in\mathbb{R}^{n-1}: b'\cdot\lambda'>0\}.$$
%By Lemma 6.2 of \cite{CNS3},
We have
\begin{equation}
\sum_{\alpha<n} U_{\alpha \alpha} (x) = \psi (x)\geq \psi (y_{0})\mbox{ for all }x\in\partial M \mbox{ near }y_{0}.
\label{bs9}\end{equation}
By \eqref{hess-a200} and \eqref{bs9} we have for $x\in\partial M$ near $y_{0}$,
\begin{equation}
\label{bs10}
\begin{aligned}
 \nabla_{n}(u-\underline{u})(x) \sum_{\alpha<n} B_{\alpha\alpha}(x)
\leq \sum_{\alpha<n} \ul U_{\alpha \alpha} (x) - \psi (y_{0}),
\end{aligned}
\end{equation}
where $\ul U := \nabla^2 \underline{u} + \chi_1$.
Since $\lambda (\ul U) \in \Gamma$, there exists a uniform constant $\tilde{c} > 0$ such that
$\sum_{\alpha<n} \ul U_{\alpha \alpha} (y_0) \geq \tilde{c}$. We may assume
$\psi (y_{0})<\frac{\tilde{c}}{2}$ for otherwise we are done. By \eqref{hess-a200}, we have
\begin{equation}
\label{bs11}
\begin{aligned}
 \nabla_{n}(u-\underline{u})(y_{0}) \sum_{\alpha<n} B_{\alpha\alpha}(y_{0})
  \geq \sum_{\alpha < n} \ul U_{\alpha \alpha} (y_0)) - \psi (y_{0}) \geq \frac{\tilde{c}}{2}.
\end{aligned}
\end{equation}
It follows that
$$\sum_{\alpha<n} B_{\alpha\alpha}\geq c_1$$
%and
%$$\alpha(x)\equiv\sum_{\alpha<n}\mu_{\alpha}(A^{\alpha\alpha}(\underline{u}(x),\nabla \underline{u}(x))+\nabla_{\alpha\alpha}\underline{u}(x))-d(x_{0})\geq C$$
in $\overline{M}_{\delta'}$ for some uniform positive constant $c_1$ and $\delta'$.% where $\delta$ is as in Lemma \ref{cj-lem4}.
%Set
%$$\beta(x)\equiv\sum_{\alpha<n}\mu_{\alpha}B_{\alpha\alpha}(x)$$
%and
%$$h(x)=\sum_{\alpha<n}\mu_{\alpha}h_{\alpha}(x).$$
Thus, by \eqref{bs10},
\begin{equation}\nabla_{n}(u-\underline{u})\leq \varPhi \mbox{ on }\partial M \cap\overline{M}_{\delta'},
\label{bs12}\end{equation}
where
\[
\varPhi (x) := \frac{\sum_{\alpha<n} \ul U_{\alpha \alpha} (x)- \psi(y_{0})}{\sum_{\alpha<n} B_{\alpha\alpha}(x)}
\]
is smooth in $M_{\delta'}$. By \cite{Guan14}, we see
\[
|\mathcal{L} \nabla_n (u - \ul u)| \leq C \Big(1 + \sum f_i |\lambda_i|
  + \sum F^{ii}\Big) \mbox{ in } M_{\delta'}.
\]
Thus, by Lemma \ref{cj-lem4} we can choose $\varPsi$ such that
\[
\begin{aligned}
\mathcal{L} (\nabla_n (u - \ul u) - \varPhi - \varPsi) \geq \,& 0 \mbox{ in } M_{\delta_0}\\
  \nabla_n (u - \ul u) - \varPhi - \varPsi \leq \,& 0 \mbox{ on } \partial M_{\delta_0}
\end{aligned}
\]
for some positive constant $\delta_0 \leq \delta'$.
By the maximum principle, we get a bound
\[
\nabla_{nn} u (y_0) \leq C.
\]
Note that when $k=n$, at $y_0$, the equation \eqref{cj-8} is
\begin{equation}
\label{eqn1}
\mu_n \sigma_{n - 1} (\mu') - \sum_{\alpha < n} U_{\alpha n}^2 \sigma_{n-2; \alpha} (\mu') = f,
\end{equation}
where $\mu_i (1 \leq i \leq n)$ is defined as in \eqref{eqn} with $x_0$ replaced by $y_0$. It follows that
\[
\frac{f}{\mu_n} = \sigma_{n - 1} (\mu') + \frac{\sum_{\alpha < n} U_{\alpha n}^2 \sigma_{n-2; \alpha} (\mu')}{\mu_n} \leq \sigma_{n - 1} (\mu') \leq C
\]
since $\lambda(U)(y_{0})$ lies in an \emph{a priori} bounded subset of $\Gamma$. Therefore,
\[
\mu_n = \sum_{\alpha < n} U_{\alpha \alpha} (y_0) = \psi (y_0) \geq \frac{f}{C} \geq \frac{\tilde{f}_1^k}{C} \geq c_0
\]
for some $c_{0}>0$ by \eqref{fbound}.
\end{proof}
\eqref{purenormal} follows from \eqref{cj-17} and \eqref{eqn1} (with $y_0$ replaced by $x_0$).
Theorem \ref{cj-thm-4} is proved.

After establishing the \emph{a priori} $C^2$ estimates, the equation \eqref{cj-8} becomes uniformly elliptic
with respect to admissible solutions by \eqref{fbound}. The $C^{2, \alpha}$ estimates can be proved using Evans-Krylov theory
since \eqref{cj-8} is concave for admissible solutions.
The higher order estimates are derived from the classic Schauder estimates.
Theorem \ref{cj-thm-5} can be proved by standard arguments using the continuity method and degree
theory for which the reader is referred to Section 8 in \cite{Guan99} and \cite{CNS84} where Monge-Amp\`ere equations are treated for more details.

\bigskip


\begin{thebibliography}{99}


%\bibitem{Alexandrov56} A. D. Alexandrov, {\em  Uniqueness theorems for surfaces in the large, I},
%    Vestnik Leningrad. Univ. {\bf 11} (1956), 5-17.
\bibitem{Andrews94} B. Andrews, {\em Contraction of convex hypersurfaces in Euclidean space},
   Calc. Var. PDE  {\bf 2} (1994), 151--171.
\bibitem{CNS84} L. A. Caffarelli, L. Nirenberg, and J. Spruck, {\em The Dirichlet problem for nonlinear second-order elliptic equations I. Monge-Amp\`ere equations},
   Comm. Pure Applied Math. {\bf 37} (1984), 369-402.
\bibitem{CNS3} L. A. Caffarelli, L. Nirenberg and J. Spruck,
   {\em The Dirichlet problem for nonlinear second-order elliptic equations III: Functions of eigenvalues of the Hessians},
   Acta Math. {\bf 155} (1985), 261-301.
%\bibitem{CNS86} L. A. Caffarelli, L. Nirenberg, and J. Spruck, {\em Nonlinear second order elliptic equations. IV. Starshaped compact Weingarten hypersurfaces},
%    Current topics in partial differential equations. Kinokuniya, Tokyo, (1986), 1-26.
\bibitem{CY76} S. Y. Cheng and S. T. Yau, {\em  On the regularity of the solution of the n-dimensional Minkowski problem},
    Comm. Pure Applied Math. {\bf 29} (1976), 495-516.
%\bibitem{Chern59} S. S. Chern, {\em  Integral formulas for hypersurfaces in Euclidean space and their applications to uniqueness theorems},
%    J. Math. Mech. {\bf 8} (1959), 947-955.
\bibitem{CW01} K.-S. Chou and X.-J. Wang, {\em A variational theory of the Hessian equation},
    Comm. Pure Appl. Math. {\bf 54} (2001), 1029--1064.
\bibitem{CJ20} J. Chu and H. Jiao, {\em Curvature estimates for a class of Hessian type equations},
    arXiv: 2004.05463.
\bibitem{FWW10} J. Fu, Z. Wang and D. Wu, {\em Form-type Calabi-Yau equations},
    Math. Res. Lett. {\bf 17} (2010), no. 5, 887-903.
\bibitem{FWW15} J. Fu, Z. Wang and D. Wu, {\em Form-type equations on K\"ahler manifolds of nonnegative orthogonal bisectional curvature},
    Calc. Var. PDE. {\bf 52} (2015), 327-344.
\bibitem{Gauduchon84} P. Gauduchon, {\em La 1-forme de torsion d'une vari\'et\'e hermitienne compacte},
    Math. Ann. {\bf 267} (1984), no. 4, 495--518.
\bibitem{Gerhardt96} C. Gerhardt, {\em Closed Weingarten hypersurfaces in Riemannian manifolds},
    J. Differential Geometry {\bf 43} (1996), 612--641.
\bibitem{Guan94} B. Guan, {\em The Dirichlet problem for a class of fully nonlinear elliptic equations},
    Comm. Partial Diff. Equations {\bf 19} (1994), 399-416.
\bibitem{Guan99} B. Guan, {\em The Dirichlet problem for Hessian equations on Riemannian manifolds},
    Calc. Var. PDE {\bf 8} (1999), 45-69.
\bibitem{Guan14} B. Guan, {\em Second-order estimates and regularity for fully nonlinear ellitpic equations on Riemannian manifolds},
    Duke Math. J. {\bf 163} (2014), 1491-1524.
%\bibitem{GG02} B. Guan and P. Guan,  {\em Convex hypersurfaces of prescribed curvatures},
%    Ann. of Math. (2) {\bf 156} (2002), no. 2, 655-673.
\bibitem{GJ15} B. Guan and H. Jiao {\em Second order estimates for Hessian type fully nonlinear elliptic equations on Riemannian manifolds},
    Calc. Var. PDE {\bf 54} (2015), 2693-2712.
\bibitem{GJ16} B. Guan and H. Jiao, {\em The Dirichlet problem for Hessian type elliptic equations on Riemannian manifolds},
    Discrete Contin. Dyn. Syst. {\bf 36} (2016), 701-714.
\bibitem{GL94} P.-F. Guan and Y.-Y. Li, {\em On Weyl problem with nonnegative Gauss curvature},
    J. Differential Geometry {\bf 39} (1994), 331-342.
%\bibitem{GL97} P.-F. Guan and Y.-Y. Li, {\em ${C^{1,1}}$ Regularity for solutions of a problem of Alexandrov},
%    Comm. Pure Applied Math. {\bf 50} (1997), 789-811.
%\bibitem{GM03} P.-F. Guan and X.-N. Ma, {\em The Christoffel-Minkowski problem. I. Convexity of solutions of a Hessian equation},
%    Invent. Math. {\bf 151} (2003), 553-577.
%\bibitem{GLL12} P. Guan, J. Li and Y. Li, {\em Hypersurfaces of prescribed curvature measure},
%Duke Math. J. {\bf 161} (2012), no. 10, 1927-1942.
\bibitem{GRW15} P.-F. Guan, C.-Y. Ren and Z.-Z. Wang, {\em Global $C^2$ estimates for convex solutions of curvature equations},
    Commun. Pure Appl. Math. {\bf 68} (2015), 1927-1942.
%\bibitem{HL13} F. R. Harvey and H. B. Lawson, Jr., {\em p-convexity, p-plurisubharmonicity and the Levi problem},
%    Indiana Univ. Math. J. {\bf 62} (2013), no. 1, 149-169.
\bibitem{HMW10} Z. Hou, X.-N. Ma and D. Wu, {\em A second order estimate for complex Hessian equations on a compact K\"ahler manifold},
    Math. Res. Lett. {\bf 17} (2010), no. 3, 547-561.
\bibitem{HZ95} J.-X. Hong and C. Zuily, {\em Isometric embedding of the 2-sphere with nonnegative curvature in ${\mathbb{R}^3}$},
    Math.Z. {\bf 219} (1995), 323-334.
\bibitem{Ivochkina80} N. M.~Ivochkina, {\em The integral method of barrier functions and the Dirichlet problem
    for equations with operators of the Monge-Amp\`ere type}, (Russian) Mat. Sb. (N.S.) 112 (1980), 193--206;
    English transl.: Math. USSR Sb. {\bf 40} (1981) 179-192.
%\bibitem{Ivochkina89} N. M. Ivochkina, {\em Solution of the Dirichlet problem for equations of mth order curvature. (Russian)},
%    Mat. Sb. {\bf 180} (1989), 867-887, 991; translation in Math. USSR-Sb. {\bf 67} (1990), no. 2, 317-339.
%\bibitem{Ivochkina90} N. M. Ivochkina, {\em The Dirichlet problem for the curvature equation of order m},
%    Algebra i Analiz. {\bf 2} (1990), no. 3, 192-217; translation in Leningrad Math. J. {\bf 2} (1991), no. 3, 631-654.
\bibitem{JT18} F.-D. Jiang and N. S. Trudinger, {\em On the second boundary value problem for Monge-Amp\`{e}re type equations and geometric optics},
    Arch. Rational Mech. Anal. {\bf 229} (2018), 547--567.
\bibitem{JT20} F.-D. Jiang and N. S. Trudinger, {\em On the Dirichlet problem for general augmented Hessian equations},
    J. Differential Equations {\bf 269} (2020), 5204--5227.
%\bibitem{LL03} A.-B. Li and Y.-Y. Li, {\em On some conformally invariant fully nonlinear equations},
%    Commun.Pure Appl.Math. {\bf 56} (2003), 1416-1464.
\bibitem{LiYY90} Y.-Y. Li, {\em Some existence results of fully nonlinear elliptic equations of Monge-Amp\`ere type},
    Comm. Pure Applied Math. {\bf 43} (1990), 233--271.
\bibitem{Nirenberg53} L. Nirenberg, {\em The Weyl and Minkowski problems in differential geometry in the large},
    Comm. Pure Applied Math. {\bf 6} (1953), 337-394.
\bibitem{Pogorelov52} A. V. Pogorelov, {\em Regularity of a convex surface with given Gaussian curvature},
    Mat. Sb. {\bf 31} (1952), 88-103.
\bibitem{Pogorelov78} A. V. Pogorelov, {\em The Minkowski Multidimentional Problem},
    Winston, Washington. (1978).
\bibitem{Popovici15} D. Popovici, {\em Aeppli cohomology classes associated with Gauduchon metrics on compact complex manifolds},
    Bull. Soc. Math. France {\bf 143} (2015), no. 4, 763--800.
\bibitem{RW19} C. Ren and Z. Wang, {\em On the curvature estimates for Hessian equations},
    Am. J. Math. {\bf 141} (2019), 1281-1315.
\bibitem{RW02} C. Ren and Z. Wang, {\em The global curvature estimate for the $n-2$ Hessian equation},
    preprint, arXiv: 2002.08702.
%\bibitem{Sha86} J. P. Sha, {\em p-convex Riemannian manifolds},
%    Invent. Math. {\bf 83} (1986), no. 3, 437-447.
%\bibitem{Sha87} J. P. Sha, {\em Handlebodies and $p$-convexity},
%    J. Differential Geom. {\bf 25} (1987), no. 3, 353--361.
%\bibitem{SX17} J. Spruck and L. Xiao,{\em A note on starshaped compact hypersurfaces with a prescribed scalar curvature in space forms},
%    Rev. Mat. Iberoam. {\bf 33} (2017), 547-554.
\bibitem{Szekelyhidi18} G. Sz\'ekelyhidi, {\em Fully non-linear elliptic equations on compact Hermitian manifolds},
    J. Differential Geometry {\bf 109} (2018), no. 2, 337--378.
\bibitem{STW17} G. Sz\'ekelyhidi, V. Tosatti and B. Weinkove, {\em Gauduchon metrics with prescribed volume form},
    Acta Math. {\bf 219} (2017), no. 1, 181--211.
\bibitem{TW17} V. Tosatti and B. Weinkove, {\em The Monge-Amp\`ere equation for $(n-1)$-plurisubharmonic functions on a compact K\"ahler manifold},
    J. Amer. Math. Soc. {\bf 30} (2017), no. 2, 311--346.
\bibitem{TW19} V. Tosatti and B. Weinkove, {\em Hermitian metrics, $(n-1,n-1)$ forms and Monge-Amp\`ere equations},
    J. Reine Angew. Math. {\bf 755} (2019), 67--101.
%\bibitem{T90} N. S. Trudinger, {\em The Dirichlet problem for the prescribed curvature equations},
%    Arch. Rational Mech. Anal. {\bf 111} (1990), 153-179.
\bibitem{Trudinger95} N. S. Trudinger, {\em On the Dirichlet problem for Hessian equations},
    Acta Math. {\bf 175} (1995), 151--164.
\bibitem{TW99} N. S. Trudinger and X.-J. Wang, {\em Hessian measures. II.},
    Ann. Math. (2) {\bf 150} (1999), 579--604.
\bibitem{Urbas02} J. Urbas, {\em Hessian equations on compact Riemannian manifolds},
    Nonlinear Problems in Mathematical Physics and Related Topics II 367--377, Kluwer/Plenum, New York, 2002.
\bibitem{W94} X. -J. Wang, {\em A class of fully nonlinear elliptic equations and related functionals},
    Indiana Univ. Math. J., {\bf 43} (1994), no. 1, 25-54.
%\bibitem{Wu87} H. Wu, {\em Manifolds of partially positive curvature},
%    Indiana Univ. Math. J. {\bf 36} (1987), no. 3, 525-548.



\end{thebibliography}
\end{document}